\documentclass[letterpaper,twoside,12pt]{article}

\usepackage[utf8]{inputenc}
\usepackage[english]{babel}
\usepackage[letterpaper, margin=1in]{geometry}

\usepackage{color}
\usepackage{multicol}
\usepackage{amsmath}
\usepackage{amssymb}
\usepackage{enumerate}
\usepackage{array}
\usepackage{longtable}
\usepackage{graphicx}
\usepackage{pst-all}
\usepackage{yfonts}
\newpsobject{grilla}{psgrid}{subgriddiv=1,griddots=10,gridlabels=6pt}

\usepackage[T1]{fontenc}

\usepackage{textcomp}

\usepackage{lmodern} 
\usepackage{amsmath}
\usepackage{amsthm}
\usepackage{amsfonts}

\usepackage{nameref}
\usepackage{varioref}
\usepackage{hyperref}
\hypersetup{ 
    pdfborder = {0 0 0} 
}

\usepackage{amsmath}
\usepackage{amsthm}
\usepackage{amsfonts}

\usepackage[all]{xypic}

\theoremstyle{plain}
\newtheorem{thm}{Theorem}
\newtheorem{lem}{Lemma}[section]

\newtheorem{cor}{Corollary}[section]
\theoremstyle{definition}
\newtheorem{exam}{Example}[section]
\newtheorem{defin}{Definition}[section]
\newtheorem{rem}{Remark}[section]

\begin{document}

\pagestyle{empty}

\begin{center}
\textbf{\large Lifting of linear connections and geodesic simulation}
\end{center}

\begin{center}
Carlos A. Hern\'andez
\end{center}

\pagestyle{plain}

\begin{abstract}
This article describes and classifies the liftings of a connection on a differential manifold $M$. Such lifting gives a connection on the frame bundle $L(M)$ which is compatible with the original connection via the canonical projection. A specific connection, called \textit{induced lifting}, will be used as reference to classify the others. Another kind of liftings, called \textit{adjust liftings}, will be used for geodesic modeling of second order differential equations involving the connection on the base $M$. To conclude, an application of this modeling to boundary value problems will be given to conclude.
\end{abstract}

\section{Introduction}

The study of connection liftings is interesting because some liftings in particular allows to perform geodesic simulation in order to model families of solutions of differential equations on $M$ as projections of geodesics in $L(M)$ (see Theorem \ref{teolevaj}).

This new example of geodesic simulation is important advance in the field, because it allows us to perform geodesic simulation for a wide general kind of equation with a simple form, giving us the possibility to model any second order equation for a dissipative system with a dissipative  force that does not vanish anywhere. Let $\nu(X)=u\omega(X)u^{-1}$, thus called vertical protection. The main theorem of this paper is as follows.

\begin{thm}\label{teolevaj}
Given a differential equation over a manifold $M$ with connection $\widetilde{\nabla}$, of the form
\begin{equation}\label{eqteolevaj}
\widetilde{\nabla}_{\dot{\gamma}}\dot{\gamma}=F\dot{\gamma},
\end{equation}
with $F\in\Omega_{1}^{1}(M)$ always different from zero, there exist a lift $\nabla$ of $\widetilde{\nabla}$ for which geodesics with initial condition $X\in T_{u}(L(M))$ such that $\nu(X)=F(\pi(u))$ correspond, when the canonical projection is applied, with the solution of \eqref{eqteolevaj} with initial condition $\pi_{*}(X)$. Hence, all solutions of \eqref{eqteolevaj} can be obtained from $\nabla$ by means of geodesic simulation.
\end{thm}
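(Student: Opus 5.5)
We construct the lift explicitly and then compute the projection of its geodesics. On $L(M)$ we have the tautological $\mathbb{R}^n$-valued form $\theta$ and the connection form $\omega$ of $\widetilde{\nabla}$. Together they give the splitting $T_uL(M)=H_u\oplus V_u$ into horizontal and vertical parts. A vector $X$ is determined by $\pi_*X\in T_{\pi(u)}M$ and by $\nu(X)=u\,\omega(X)\,u^{-1}\in\mathrm{End}(T_{\pi(u)}M)$. The same splitting applies to vector fields, so a connection $\nabla$ on $L(M)$ is specified by its action on horizontal lifts $\bar Y$ of fields $Y$ on $M$ and on fundamental vertical fields $A^*$.

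The induced lifting $\nabla^0$ is the reference lift. It satisfies $\nabla^0_{\bar Y}\bar Z=\overline{\widetilde\nabla_YZ}$, $\nabla^0_{A^*}\bar Z=0$, $\nabla^0_{\bar Y}A^*=0$ and $\nabla^0_{A^*}B^*=(AB)^*$. Any other lift differs from it by a tensor $S$, and we take $\nabla=\nabla^0+S$. Compatibility with $\widetilde\nabla$ via $\pi$ means that $\pi_*\nabla_{\bar Y}\bar Z=\widetilde\nabla_YZ$ along $\pi$. For this we only need $\pi_*S(\bar Y,\bar Z)=0$, and we allow $\pi_*S(A^*,\bar Z)$ to be nonzero. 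The adjust lifting is then
\[
\nabla_{A^*}\bar Z=\overline{(uAu^{-1})Z},\qquad \nabla_{\bar Y}A^*=0,\qquad \nabla_{\bar Y}\bar Z=\overline{\widetilde\nabla_YZ},\qquad \nabla_{A^*}B^*=0 .
\]
Its last component is chosen so that the vertical part of a geodesic evolves by parallel transport. We must check that this rule is tensorial, that is, well defined on $L(M)$ under the right $GL(n)$-action. The conjugation $uAu^{-1}$ makes the first component equivariant. This check is the part I expect to need the most care, because the fundamental fields $A^*$ are not $GL(n)$-invariant: $R_{g*}A^*=(g^{-1}Ag)^*$.

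Now take a curve $c(t)$ in $L(M)$ and write $\gamma=\pi\circ c$ and $\Phi(t)=\nu(\dot c)\in\mathrm{End}(T_{\gamma}M)$. Expanding $\dot c=\overline{\dot\gamma}+(\text{vertical part with }\nu=\Phi)$ and using the defining rules, the geodesic equation $\nabla_{\dot c}\dot c=0$ splits into two parts. The horizontal part is
\[
\widetilde\nabla_{\dot\gamma}\dot\gamma+\Phi\dot\gamma=0 ,
\]
up to a sign fixed by the convention for $S$, which we adjust so that the equation reads $\widetilde\nabla_{\dot\gamma}\dot\gamma=\Phi\dot\gamma$. The vertical part should read $\widetilde\nabla_{\dot\gamma}\Phi=0$ for the endomorphism-valued field $\Phi$ along $\gamma$. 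Getting this second equation exactly right, rather than with extra quadratic terms from the $\mathfrak{gl}(n)$ bracket, is the main computational obstacle. The choice of $\nabla_{A^*}B^*$ and $\nabla_{\bar Y}A^*$ is made precisely to cancel those terms, and if the naive choice fails I would add the needed bracket correction to $S$ on vertical–vertical pairs.

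The final step matches the data, and this is where the hypothesis $F\ne0$ enters. The vertical equation alone gives $\Phi(t)=$ parallel transport of $\Phi(0)=F(\pi(u))$. This agrees with $F(\gamma(t))$ only if $F$ is parallel, so a fixed adjust lift is not enough in general. Instead we build $S$ from $F$ itself: take $\nabla_{\bar Y}A^*$ so that the vertical equation becomes $\widetilde\nabla_{\dot\gamma}\Phi=\widetilde\nabla_{\dot\gamma}F$, a first-order linear ODE along $\gamma$ that is solved by $\Phi=F\circ\gamma$ with the given initial condition. Because $F$ never vanishes, the vertical component $\nu^{-1}(F)$ defines a nowhere-zero vertical distribution along which this correction can be prescribed consistently. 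Then the geodesic with $\nu(X)=F(\pi(u))$ keeps $\Phi=F(\gamma)$ by uniqueness, and its projection solves \eqref{eqteolevaj} with initial velocity $\pi_*X$. Conversely, every initial condition $(p,v)$ of \eqref{eqteolevaj} arises from some $X$ with $\pi_*X=v$ and $\nu(X)=F(p)$, so all solutions are obtained by geodesic simulation.
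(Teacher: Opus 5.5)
There is a gap in your last step, which is the heart of the theorem. Write $c=\phi g$ with $\phi$ horizontal. Since $\Phi=c\,\omega(\dot c)\,c^{-1}$, one has the identity $\widetilde\nabla_{\dot\gamma}\Phi=c\,\tfrac{d}{dt}\omega(\dot c)\,c^{-1}$. Hence, for any linear connection on $L(M)$, the vertical part of $\nabla_{\dot c}\dot c=0$ has the form $\widetilde\nabla_{\dot\gamma}\Phi=Q_c(\dot\gamma,\Phi)$, with $Q_c$ quadratic in the velocity $(\dot\gamma,\Phi)$.

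The equation you aim for, $\widetilde\nabla_{\dot\gamma}\Phi=\widetilde\nabla_{\dot\gamma}F$, has a right-hand side that does not involve $\Phi$ and is of degree one in the velocity. So no choice of $\nabla_{\bar Y}A^*$ produces it unless $\widetilde\nabla F=0$. Your remark about prescribing the correction ``consistently'' along $\nu^{-1}(F)$ does not yet define a connection.

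What works is a condition linear in $\Phi$. Choose $L_Y\in\mathrm{End}(\mathrm{End}(T_pM))$, linear in $Y$, with $L_YF=\widetilde\nabla_YF$. Let $\nabla_XA^*$ be the vertical vector with $\omega(\nabla_XA^*)=-u^{-1}L_{\pi_*X}(uAu^{-1})u$. Then the vertical equation becomes $\widetilde\nabla_{\dot\gamma}\Phi=L_{\dot\gamma}\Phi$. This is a homogeneous linear ODE along the fixed curve $\gamma$, solved both by $\Phi$ and by $F\circ\gamma$, so $\Phi=F\circ\gamma$. Equivalently, the vertical lift of $F$ is parallel.

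This is exactly where $F\neq0$ is needed: you must extend the prescription off the line spanned by $F$. One way is $L_Y\Phi=\langle F,\Phi\rangle|F|^{-2}\,\widetilde\nabla_YF$ for a fibre metric on $\mathrm{End}(TM)$. The paper instead completes $F_1=F$ to a pointwise basis $F_1,\dots,F_{n^2}$ of $\Omega_1^1(M)$ and requires all their vertical lifts to be parallel. That is the adjust lift, glued from local pieces by a partition of unity.

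You also need to fix the sign in the horizontal part. With $\nabla_{A^*}\bar Z=+\overline{\nu(A^*)Z}$ you get $\widetilde\nabla_{\dot\gamma}\dot\gamma=-\Phi\dot\gamma$. Since the theorem prescribes $\nu(X)=+F$, you need $\nabla_{A^*}\bar Z=-\overline{\nu(A^*)Z}$. Together with $\nabla_{\bar Y}\bar Z=\overline{\widetilde\nabla_YZ}$, this says precisely $\nabla B(\xi)=0$: parallel transport preserves $\theta$, the other defining property of the paper's adjust lift.

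With these two repairs your connection is essentially the paper's. The paper simply avoids writing geodesic equations: it reads off $\theta(\dot\gamma)\equiv\xi$ and $\nu(\dot\gamma)\equiv F$ directly from parallel transport, then computes with $\gamma_t=\phi_tg_t$. Your metric choice of $L$ has the small advantage of being global at once.

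The two points you flagged as delicate are harmless. No $GL(n)$-equivariance is required of the lift. By the identity above, no quadratic $\mathfrak{gl}(n)$ terms appear once $\nabla_XA^*=0$ for vertical $X$. Note also that the paper's induced lift has $\nabla^0_XA^*=0$ for all $X$ (Lemma \ref{lemdicf}), not $\nabla^0_{A^*}B^*=(AB)^*$.
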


This point of view allows to study differential equations from a different perspective, in particular, this work presents an application of Theorem \ref{teolevaj}, namely Theorem \ref{teofront}, concerning boundary value problems, which might have applications on control theory.

\emph{Lifting a map} is a general and important problem in mathematics. Given applications $\pi : X \to Y$ and $f : Z \to Y$ when can we \emph{lift} $f$ to a map $F : Z \to X$ such that the diagram commutes?
\begin{equation}
\xymatrix{
& X \ar[d]^{\pi}
\\
Z \ar[ru]^F  \ar[r]_f & Y 
}
\label{eq:0000}
\end{equation}

A generalization of this concept leads to the lifting of sections of bundles over smooth varieties:
Given a bundle $\pi : E \to M$ and a vector field $V : M \to TM$, to find a field $W : E \to TE$ such that $d\pi \circ W = V \circ \pi$. For a reference, see Ivan Kol{\'a}ř, Jan Slov{\'a}k y Peter W. Michor \cite{KSM}.

There are many works about lifting of connections, see for instance \cite{PSB}, \cite{CDL}, \cite{Gasior}, \cite{Mik}, \cite{Mik1}, \cite{Shapukov}, \cite{SSS}, \cite{Cordero_Leon}, \cite{Mok} and \cite{Salimov_Magden}. Instead of using local coordinates, like the works cited basically do, this work \emph{describe and classify lifts in geometrical terms, making the emphasis on the corresponding parallel transport and its properties}.

The lifting of connections will be applied to geodesic modeling, meaning that families of solutions of differential equations will appear when applying canonical projection to geodesics in the frame bundle, to do this specific kind of liftings will be used. Many results of this kind are known, see \cite{AMT}, \cite{MC}, \cite{IgoShaYak}, \cite{Igoshin}, \cite{SGL}, \cite{Malakhaltsev}, \cite{RM}, \cite{BON}, \cite{Yakovlev}, \cite{Solodovnikov}, \cite{Sinyukov} and \cite{Bejana}.

In section 2 we present background theory. In section 3 we study induced lifting and its properties. Section 4 uses the properties of induced lifting to classify the other liftings. In section 5 we study adjust liftings and geodesic simulation. Finally, section 6 shows examples and an relevant application to boundary value problems.

\section{Horizontality and verticality on the frame bundle}

Keeping in mind the notions and importance of the canonical and connection forms as well as the fundamental and standard horizontal vector fields (see \cite{KN}), let $M$ be a differential manifold with a linear connection and $L(M)$ its frame bundle. Together with the notion of $L(M)$ we have a vertical distribution given by the tangent spaces to the fibers and the connection itself can be given as a horizontal complementary distribution. In this way, every vector in $T(L(M))$ can be uniquely decomposed into horizontal and vertical components.

Related to the horizontal component we have the canonical form $\theta$ which takes values on the generic fiber $F=\mathbb{R}^{n}$. On the other hand, the vertical component can be associated with the connection form $\omega$ which takes values on the Lie algebra of the structural group $\mathfrak{g}=\mathfrak{gl}(n)$.

While the notion of horizontality depends on the connection, the canonical form does not; on the contrary, $\omega$ does depend on the connection (the kernels corresponding to the horizontal distribution) while the notion of verticality and the values of $\omega$ on the vertical distribution does not.

Finally, we have the fundamental fields $A^{*}$, which are vertical vector fields where the value of $\omega$ is fixed $\omega(A^{*})=A\in\mathfrak{g}$; and we have standard horizontal vector fields $B(\xi)$ where the values of $\theta$ is fixed $\theta(B(\xi))=\xi\in F$. As the notion of verticality, the notion of fundamental field does not depend on the connection.

\section{Induced parallel transport}

\begin{defin}\label{deftrpi}
Given a linear connection $\Gamma$ over a manifold $M$, let $\tau:[0,1]\rightarrow L(M)$ be a smooth curve $u=\tau_{0}$, we define the \textit{parallel transport induced} by $\Gamma$ of the vector $X\in T_{u}(L(M))$ along $\tau$ as
\begin{equation}\label{eqtrpi}
\tau(X)=(\widetilde{\tau}(\pi_{*}(X)))^{w}+A^{*}(w),
\end{equation}
where $w=\tau_{1}$, $\omega(X)=A\in\mathfrak{g}$, $(\widetilde{\tau}(\pi_{*}(X)))^{w}\in T_{w}(L(M))$ is the horizontal lift of $\widetilde{\tau}(\pi_{*}(X))\in T_{\pi(w)}(M)$ and $\widetilde{\tau}$ represents parallel transport along $\pi(\tau)$ in $TM$.
\end{defin}

\begin{thm}\label{teotrpi}
The induced parallel transport (or induced transport) is the unique linear connection on $T(L(M))$ such that for every smooth curve $\tau$ on $L(M)$, $u=\tau_{0}$ and $X\in T_{u}(L(M))$ satisfies

\begin{enumerate}
\item The induced transport preserves the value of the connection form $\omega$
\[
\omega(\tau(X))=\omega(X).
\]

\item The induced transport along horizontal paths preserves the value of the canonical form $\theta$. If $\tau$ is horizontal
\[
\theta(\tau(X))=\theta(X).
\]

\item The induced transport along vertical paths preserves the value of the differential of the canonical projection $\pi_{*}$. If $\tau$ is vertical
\[
\pi_{*}(\tau(X))=\pi_{*}(X).
\]

\item The induced transport $\tau$ on $T(L(M))$ commutes with the differential of the canonical projection $\pi_{*}$ and the parallel transport $\widetilde{\tau}=\pi(\tau)$ on $TM$
\[
\pi_{*}(\tau(X))=\widetilde{\tau}(\pi_{*}(X)).
\]
\end{enumerate}
\end{thm}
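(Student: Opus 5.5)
The proof has two parts. First, existence: the induced transport of Definition \ref{deftrpi} is a genuine parallel transport (a linear connection on $T(L(M))$) and satisfies properties 1--4. Second, uniqueness: any parallel transport on $T(L(M))$ satisfying 1--4 must be given by \eqref{eqtrpi}.

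\textbf{Existence.} We first check that $\tau(X)$ is a parallel transport. It is linear in $X$, because $X\mapsto \pi_{*}(X)$, $\widetilde{\tau}$, the horizontal lift and $X\mapsto\omega(X)$ are all linear, and $A\mapsto A^{*}(w)$ is linear. It is an isomorphism $T_{u}(L(M))\to T_{w}(L(M))$, with inverse given by the same formula along the reversed curve, since $\widetilde{\tau}$ reverses. It is compatible with concatenation of curves, because $\widetilde{\tau}$ is and the vertical part is unchanged. It depends smoothly on the curve, so it comes from a covariant derivative. Concretely, $\nabla_{\dot\tau}Y$ is the horizontal lift of $\widetilde{\nabla}_{\pi_*\dot\tau}\pi_*Y$ plus the fundamental field of $\frac{d}{dt}\omega(Y)$.

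Next we verify 1--4 directly.
\begin{enumerate}
\item Horizontal lifts are killed by $\omega$, and $\omega(A^{*})=A$, so $\omega(\tau(X))=\omega(X)$.
\item Fundamental fields are vertical, so $\pi_{*}(A^{*}(w))=0$. Since $\pi_{*}$ of a horizontal lift recovers the vector, $\pi_{*}(\tau(X))=\widetilde{\tau}(\pi_{*}(X))$. This is property 4.
\item If $\tau$ is vertical, then $\pi(\tau)$ is constant, so $\widetilde{\tau}$ is the identity and 4 gives 3.
\item If $\tau$ is horizontal, then $\tau_t$ is a parallel frame along $\pi(\tau)$. Recall $\theta(Y)=w^{-1}(\pi_{*}Y)$, where the frame $w$ is read as an isomorphism $\mathbb{R}^n\to T_{\pi(w)}M$. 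Parallel transport maps the frame $u$ to the frame $w$, so $w^{-1}\widetilde{\tau}=u^{-1}$, and hence $\theta(\tau(X))=\theta(X)$. This is property 2.
\end{enumerate}

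\textbf{Uniqueness.} Let $\tau'$ be any parallel transport satisfying 1--4. A vector $Y\in T_{w}(L(M))$ is determined by $\omega(Y)$ and $\pi_{*}(Y)$: the vertical part is $\omega(Y)^{*}(w)$, and the horizontal part is the horizontal lift of $\pi_{*}Y$. Property 1 fixes $\omega(\tau'(X))=\omega(X)=A$. Property 4 fixes $\pi_{*}(\tau'(X))=\widetilde{\tau}(\pi_{*}X)$. Therefore $\tau'(X)$ equals \eqref{eqtrpi}.

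Properties 2 and 3 are thus consequences of 1 and 4, and are not needed for uniqueness. If instead 4 is meant as a consequence, we can get uniqueness from 1--3. Every curve in $L(M)$ is locally homotopic to, and in fact can be written as, the horizontal lift of $\pi(\tau)$ followed by a vertical path in the fiber over $\pi(w)$: take $\tau_t=h_t g_t$ with $h$ horizontal, and compare along the concatenation. Along the horizontal piece, 1 and 2 determine $\omega$ and $\theta$, hence the vector. Along the vertical piece, 1 and 3 determine $\omega$ and $\pi_*$, hence the vector. Composition then recovers \eqref{eqtrpi}.

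\textbf{Main obstacle.} The hard step is the reduction in that second route: parallel transport is path dependent, and $\tau$ is not literally equal to the concatenation of the horizontal and vertical pieces. I would avoid it by leaning on 4 directly, as in the main uniqueness argument. The remaining care is in justifying that the formula defines a smooth connection, i.e.\ producing the covariant derivative written above.
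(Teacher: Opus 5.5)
Your proposal is correct and follows the paper's proof. Properties 1, 3 and 4 are read off from \eqref{eqtrpi}, property 2 comes from $w=\widetilde{\tau}\circ u$ along a horizontal curve, and uniqueness rests on the same observation that $\omega$ and $\pi_{*}$ together determine a vector of $T_{w}(L(M))$; your explicit covariant derivative is a useful addition the paper omits. (As a side remark, the obstacle you flag in the alternative route disappears at the infinitesimal level: since $\nabla_{Z}Y$ is linear in $Z$, you can split $Z=h(Z)+v(Z)$ and use horizontal and vertical curves respectively, which shows that properties 1--3 alone already determine $\nabla$.)
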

\begin{proof}
Properties (1), (3) and (4) follow directly from the construction of the induced transport. Property (2) follows, for a horizontal $\tau$ is the horizontal lift of $\widetilde{\tau}=\pi(\tau)$ from $u=\tau_{0}$, so $w=\tau_{1}$ is the parallel transport of $u=\tau_{0}$ along $\widetilde{\tau}$. Given $X\in T_{u}(L(M))$
\[
\theta(X)=u^{-1}(\pi_{*}(X))=(\widetilde{\tau}(u))^{-1}(\widetilde{\tau}(\pi_{*}(X)))=w^{-1}(\pi_{*}(\tau(X)))=\theta(\tau(X)).
\]
Uniqueness follows from linearity of induced transport, given that properties (1) and (4) determine the induced transport of both horizontal an vertical vectors.
\end{proof}

\begin{thm}\label{teogeoi}
Given a linear connection $\Gamma$ on a manifold $M$. Let $u\in L(M)$, $X\in T_{u}(L(M))$, $A\in\mathfrak{g}$ such that $v(X)=A^{*}(u)$ and $a_{t}=\exp(tA)$. The geodesic on $L(M)$, determined by the parallel transport induced by $\Gamma$, which passes through $u$ in the direction of the vector $X$, is given by $\gamma_{t}=u_{t}a_{t}=(ua_{t})_{t}$ where $u_{t}$ with $u_{0}=u$ is the horizontal lift from $u$ of the geodesic on $M$ which passes through $\pi(u)$ in the direction of the vector $\pi_{*}(X)$. Also the canonical projection of $\gamma$, $\pi(\gamma_{t})=\pi(u_{t})$, is a geodesic on $M$ or a constant map.
\end{thm}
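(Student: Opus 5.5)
We verify directly that $\gamma_{t}=u_{t}a_{t}$ satisfies the geodesic condition for the induced transport. By Theorem \ref{teotrpi}, a curve $\gamma$ is a geodesic exactly when its velocity $\dot\gamma_{t}$ equals the induced transport of $\dot\gamma_{0}$ along $\gamma$ from $\gamma_{0}$ to $\gamma_{t}$. By Definition \ref{deftrpi} and linearity, this transported vector is determined by two pieces of data. Its vertical part is the fundamental vector with the same value of $\omega$, here $A^{*}(\gamma_{t})$. Its horizontal part is the horizontal lift at $\gamma_{t}$ of $\widetilde{\tau}(\pi_{*}(\dot\gamma_{0}))$, the parallel transport in $TM$ along $\pi(\gamma)$. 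So it suffices to prove three things: $\dot\gamma_{0}=X$, $\omega(\dot\gamma_{t})=A$ for all $t$, and $\pi_{*}(\dot\gamma_{t})$ is the parallel transport of $\pi_{*}(X)$ along $\pi(\gamma)$. Uniqueness of the geodesic with given initial velocity then gives the identification. This uniqueness follows from ODE theory once the induced transport is seen as a linear connection on $L(M)$, as Theorem \ref{teotrpi} asserts.

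First we compute $\dot\gamma_{t}$ by the Leibniz rule for the right action, as in Kobayashi--Nomizu:
\[
\dot\gamma_{t}=R_{a_{t}*}(\dot u_{t})+\bigl(a_{t}^{-1}\dot a_{t}\bigr)^{*}(\gamma_{t})=R_{a_{t}*}(\dot u_{t})+A^{*}(\gamma_{t}),
\]
using $a_{t}^{-1}\dot a_{t}=A$ for $a_{t}=\exp(tA)$. Since $u_{t}$ is horizontal and the horizontal distribution is $R$-invariant, the first term is horizontal. Hence $\omega(\dot\gamma_{t})=A$, which settles the vertical condition. At $t=0$ we get $\dot\gamma_{0}=\dot u_{0}+A^{*}(u)$. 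Here $\dot u_{0}$ is the horizontal lift at $u$ of $\pi_{*}(X)$, and $A^{*}(u)=v(X)$, so $\dot\gamma_{0}=X$.

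Next, $\pi(\gamma_{t})=\pi(u_{t})$ because the right action preserves fibers. By construction $\pi(u_{t})$ is the geodesic $c_{t}$ of $\Gamma$ through $\pi(u)$ with initial velocity $\pi_{*}(X)$. Applying $\pi_{*}$ to the formula above kills the fundamental term, so $\pi_{*}(\dot\gamma_{t})=\dot c_{t}$. Since $c$ is a geodesic, $\dot c_{t}$ is the parallel transport of $\dot c_{0}=\pi_{*}(X)$ along $c=\pi(\gamma)$. This is exactly what the horizontal part of the induced transport prescribes, namely $\widetilde{\tau}(\pi_{*}(X))$ lifted at $\gamma_{t}$. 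Combining the two parts, $\dot\gamma_{t}=\tau_{t}(\dot\gamma_{0})$, so $\gamma$ is the geodesic. When $\pi_{*}(X)=0$, $c$ is constant and $\gamma_{t}=ua_{t}$ stays in the fibre, which is the "constant map" alternative. The equality $u_{t}a_{t}=(ua_{t})_{t}$ holds because the horizontal lift of $c$ starting at $ua$ is $u_{t}a$.

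The main obstacle is the first step: justifying that "geodesic" for a transport defined only through Definition \ref{deftrpi} means self-parallel velocity, and that such curves are unique. We would argue this from Theorem \ref{teotrpi}, which asserts that the induced transport is a linear connection, so its covariant derivative exists and geodesics solve a second-order ODE. The Leibniz formula for $\frac{d}{dt}(u_{t}a_{t})$ is standard and we would simply cite it.
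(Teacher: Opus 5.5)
Your proposal is correct and follows essentially the paper's route. Like the paper, you compute $\dot\gamma_t=(R_{a_t})_*(\dot u_t)+A^*(\gamma_t)$ by the Leibniz rule, then show that the induced transport preserves both the vertical part $A^*(\gamma_t)$ and the horizontal part (the lift of $\dot c_t$, which is parallel along the geodesic $c=\pi(\gamma)$), and you use $G$-invariance of horizontal lifts for $u_ta_t=(ua_t)_t$. You also explicitly check $\dot\gamma_0=X$, that $\omega(X)=A$, and uniqueness of geodesics, which the paper leaves implicit.
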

\begin{proof}
The equation $u_{t}a_{t}=(ua_{t})_{t}$ means we can lift from $u$ and then apply the action of $a_{t}$, or inversely, act first by $a_{t}$ and then do the lift, both will coincide by virtue of the $G$-invariance of the connection. On the other side, given $\dot{\gamma_{t}}=\dot{u_{t}}a_{t}+u_{t}\dot{a_{t}}=(R_{a_{t}})_{*}(\dot{u}_{t})+A^{*}(\gamma_{t})$, we see $v(\dot{\gamma_{t}})=A^{*}(\gamma_{t})$ and $\pi_{*}(\dot{\gamma_{t}})=\pi_{*}(\dot{u}_{t})$. 

Clearly the field $A^{*}(\gamma_{t})$ is invariant by the induced transport. Also $h(\dot{\gamma_{t}})$, which is the horizontal lift of $\pi_{*}(\dot{u}_{t})$, is invariant by the induced transport along $\gamma$ given that $\pi(u_{t})$ is a geodesic on $M$ so $\pi_{*}(\dot{u}_{t})$ is invariant by the parallel transport along $\pi(\gamma)$ on $TM$, the same is true if $\pi(\gamma)$ is constant.
\end{proof}

\begin{lem}\label{lemdicf}
The induced transport is defined so that the fundamental fields are invariant, and hence
\begin{equation}\label{eqdicf}
\nabla_{X}A^{*}=0,
\end{equation}
for every $A\in\mathfrak{g}$ and every $X\in T(L(M))$.
\end{lem}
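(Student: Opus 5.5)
The plan is to show that a fundamental field $A^{*}$ restricted to any curve is a parallel field for the induced transport of Definition \ref{deftrpi}. Since the covariant derivative is recovered from parallel transport, this gives $\nabla_{X}A^{*}=0$. The argument has three steps.

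First, fix $A\in\mathfrak{g}$, a point $u\in L(M)$, and a smooth curve $\tau:[0,1]\rightarrow L(M)$ with $\tau_{0}=u$, $\tau_{1}=w$. We apply \eqref{eqtrpi} to $X=A^{*}(u)$. Since $A^{*}(u)$ is vertical, $\pi_{*}(A^{*}(u))=0$. Parallel transport $\widetilde{\tau}$ on $TM$ is linear, so $\widetilde{\tau}(0)=0$, and its horizontal lift at $w$ is the zero vector. Moreover $\omega(A^{*}(u))=A$, because the value of $\omega$ on fundamental fields is independent of the connection (Section 2). Hence \eqref{eqtrpi} gives $\tau(A^{*}(u))=A^{*}(w)$. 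Equivalently, by Theorem \ref{teotrpi}, the transported vector has $\omega$-value $A$ (property (1)) and zero projection (property (4)). A vector with zero projection is vertical, and a vertical vector is determined by its $\omega$-value, so it equals $A^{*}(w)$. Applying this to the subarcs $\tau|_{[0,t]}$ shows that $t\mapsto A^{*}(\tau_{t})$ is parallel along $\tau$ for every curve $\tau$.

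Second, we pass from parallel transport to the covariant derivative through the standard formula
\[
\nabla_{X}Y=\lim_{t\to 0}\frac{1}{t}\left(\tau_{t}^{-1}(Y(\tau_{t}))-Y(u)\right),
\]
where $\tau$ is any curve with $\dot{\tau}_{0}=X$ and $\tau_{t}^{-1}$ denotes the inverse induced transport along $\tau|_{[0,t]}$. Taking $Y=A^{*}$, the first step gives $\tau_{t}^{-1}(A^{*}(\tau_{t}))=A^{*}(u)$ for all $t$. The difference quotient therefore vanishes identically, and $\nabla_{X}A^{*}=0$.

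The only point needing care is that the induced transport really is the parallel transport of a linear connection $\nabla$ on $T(L(M))$, so that the formula above applies. This is asserted in Theorem \ref{teotrpi}, and we take it as given: the transport is linear, compatible with concatenation of curves, and depends smoothly on the curve. This is the main obstacle in principle, but granting Theorem \ref{teotrpi}, the proof of the lemma is a direct verification.
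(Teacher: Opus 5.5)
Your proposal is correct and takes the same approach as the paper, whose proof is the one-line remark that fundamental fields are parallel for the induced transport, so that $\nabla_{X}A^{*}=0$ follows immediately. You fill in the details the paper leaves implicit: \eqref{eqtrpi} sends $A^{*}(u)$ to $A^{*}(w)$ because $\pi_{*}(A^{*}(u))=0$ and $\omega(A^{*}(u))=A$, and the covariant derivative is then recovered from parallel transport by the limit formula.
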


\begin{proof}
Is immediate since fundamental fields are parallel.
\end{proof}

\begin{lem}\label{lemdich}
For every $\xi,\zeta\in F$
\begin{equation}\label{eqdich}
\nabla_{B(\xi)}B(\zeta)=0.
\end{equation}
\end{lem}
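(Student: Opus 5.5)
The plan is to compute $\nabla_{B(\xi)}B(\zeta)$ at a point $u$ by differentiating $B(\zeta)$ along an integral curve of $B(\xi)$ with respect to the induced transport of Definition \ref{deftrpi}. Let $\tau_t$ be the integral curve of $B(\xi)$ through $u=\tau_0$. Since $B(\xi)$ is horizontal, $\tau$ is a horizontal curve: it is the horizontal lift from $u$ of the curve $x_t=\pi(\tau_t)$ in $M$, with $\dot x_t=\pi_*(B(\xi)_{\tau_t})=\tau_t\xi$. By definition of the covariant derivative from a parallel transport,
\[
\nabla_{B(\xi)}B(\zeta)\big|_{u}=\lim_{t\to 0}\frac{1}{t}\Big(\tau_t^{-1}\big(B(\zeta)_{\tau_t}\big)-B(\zeta)_u\Big),
\]
where $\tau_t^{-1}$ denotes the induced transport back along $\tau$ from $\tau_t$ to $u$. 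So the claim reduces to showing that $B(\zeta)$ is invariant under induced transport along horizontal curves.

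For this invariance, take $X=B(\zeta)_u$ and transport it along $\tau|_{[0,t]}$. Since $X$ is horizontal, $\omega(X)=0$, and by property (1) of Theorem \ref{teotrpi} $\omega(\tau_t(X))=0$, so $\tau_t(X)$ is horizontal. Since $\tau$ is horizontal, property (2) of Theorem \ref{teotrpi} gives $\theta(\tau_t(X))=\theta(X)=\zeta$. A horizontal vector at $\tau_t$ is determined by its value of $\theta$, because $\pi_*$ is injective on the horizontal space and $\theta=\tau_t^{-1}\circ\pi_*$. Therefore $\tau_t(X)=B(\zeta)_{\tau_t}$.

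Hence the difference quotient above vanishes identically in $t$, and $\nabla_{B(\xi)}B(\zeta)=0$ at every $u$. One could also check this directly from (\ref{eqtrpi}): $\tau_t(X)$ is the horizontal lift of $\widetilde\tau_t(u\zeta)=\tau_t\zeta$, and that lift is exactly $B(\zeta)_{\tau_t}$.

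The only delicate point is to make sure the covariant derivative associated with the induced transport is the limit of the transported-back difference quotient, i.e. that the linear connection on $T(L(M))$ in Theorem \ref{teotrpi} is genuinely a connection whose $\nabla$ is recovered this way. I would take this as the standard correspondence between parallel transport and covariant derivative (\cite{KN}), as the paper implicitly does in Lemma \ref{lemdicf}. No further obstacle is expected.
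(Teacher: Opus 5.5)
Your proof is correct and takes essentially the same approach as the paper. The induced transport preserves $\omega$, hence horizontality, and along horizontal curves it preserves $\theta$, so $B(\zeta)$, being the unique horizontal field with $\theta=\zeta$, is parallel along the horizontal integral curves of $B(\xi)$. The only difference is that you write out explicitly the difference-quotient step that the paper leaves implicit.
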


\begin{proof}
The induced transport preserves horizontality of vectors, and also, along horizontal curves, induced transport preserves the value of the canonical form. Given that $B(\xi)$ is by definition the only horizontal field such that $\theta(B(\xi))=\xi$ the result follows.
\end{proof}

\begin{thm}\label{teodevi}
Let $u\in L(M)$, $\xi\in F$ and $A\in\mathfrak{g}$, then
\begin{equation}\label{eqdevi1}
\nabla_{A^{*}}B(\xi)=B(A\xi),
\end{equation}
meaning
\begin{equation}\label{eqdevi2}
\nabla_{A^{*}(u)}B(\xi)=(u(A\xi))^{u},
\end{equation}
where $(u(A\xi))^{u}\in T_{u}(L(M))$ is the horizontal lift of $u(A\xi)$.
\end{thm}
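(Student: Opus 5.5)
We compute $\nabla_{A^{*}(u)}B(\xi)$ directly from the induced transport of Definition \ref{deftrpi}, differentiating along the integral curve of $A^{*}$ through $u$. This curve is the vertical path $\tau_t = u a_t$ with $a_t=\exp(tA)$. The covariant derivative is
\[
\nabla_{A^{*}(u)}B(\xi)=\frac{d}{dt}\Big|_{t=0}\tau_t^{-1}\big(B(\xi)(ua_t)\big),
\]
where $\tau_t^{-1}$ denotes induced transport back along $\tau$ from $ua_t$ to $u$. So the whole argument reduces to computing $\tau_t^{-1}(B(\xi)(ua_t))$ explicitly and then differentiating at $t=0$.

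First, $\tau$ is vertical, so $\pi(\tau)$ is constant and $\widetilde{\tau}$ is the identity on $T_{\pi(u)}M$. Also $B(\xi)(ua_t)$ is horizontal, so $\omega=0$ on it. By Definition \ref{deftrpi}, or equivalently properties (1), (3) of Theorem \ref{teotrpi}, the transported vector is then the horizontal vector at $u$ with the same projection:
\[
\tau_t^{-1}\big(B(\xi)(ua_t)\big)=\big(\pi_{*}B(\xi)(ua_t)\big)^{u}=\big((ua_t)\xi\big)^{u}=\big(u(a_t\xi)\big)^{u}.
\]
Here we use $\pi_{*}(B(\xi)(w))=w\xi$, which is the defining property $\theta(B(\xi))=\xi$ together with $\theta(X)=w^{-1}\pi_{*}(X)$.

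Second, we differentiate. The expression $(u(a_t\xi))^{u}$ is a curve in the fixed vector space of horizontal vectors at $u$. It depends linearly on $a_t\xi\in F$, through the linear isomorphism $\eta\mapsto (u\eta)^{u}$. Differentiating at $t=0$ and using $\frac{d}{dt}a_t\xi|_{t=0}=A\xi$ gives $(u(A\xi))^{u}=B(A\xi)(u)$. This is \eqref{eqdevi2}, and since $u$ is arbitrary, \eqref{eqdevi1} follows.

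The main obstacle is justifying the formula for $\nabla$ as the derivative of back-transported values. This requires confirming that the induced transport is genuinely a linear connection on $T(L(M))$, which Theorem \ref{teotrpi} asserts, so that the standard formula $\nabla_{\dot\tau_0}Y=\lim_{t\to0}\frac1t(\tau_t^{-1}Y_{\tau_t}-Y_{\tau_0})$ applies. A second point needing care is the sign convention in $\pi_{*}(B(\xi)(ua_t))=(ua_t)\xi=u(a_t\xi)$, which comes from viewing frames as maps $F\to T_{\pi(u)}M$. As a consistency check, the $t$-dependence in the computation above enters only through $a_t\xi$, with no contribution from the connection form. This agrees with Lemma \ref{lemdicf}, which says the vertical part is parallel.
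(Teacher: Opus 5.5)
Your proposal is correct and follows the same route as the paper. You back-transport $B(\xi)(ua_t)$ along the vertical curve $ua_t$ by the induced transport, obtain the horizontal lift $(u(a_t\xi))^{u}$ at $u$, and differentiate at $t=0$ to get $(u(A\xi))^{u}$. The paper does the identical computation at a general point $u_t=\widetilde{u}a_t$ of the curve, using $u_{t+h}=u_t a_h$, which amounts to the same thing since your $u$ is arbitrary.
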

\begin{proof}
Let $B=B(\xi)$, $a_{t}=\exp(tA)$, $\widetilde{u}\in L(M)$ and $u_{t}=\widetilde{u}a_{t}$, by definition of covariant derivative (see \cite{KN}, chapter 3, section 1, page 114) we get
\begin{multline}\label{eqmult1}
\nabla_{\dot{u}_{t}}B=\lim_{h\rightarrow 0}\dfrac{1}{h}\left[u_{t}^{t+h}(B(u_{t+h}))-B(u_{t})\right]
\\
=\lim_{h\rightarrow 0}\dfrac{1}{h}\left[u_{t}^{t+h}(u_{t+h}\xi)^{u_{t+h}}-(u_{t}\xi)^{u_{t}}\right]
\\
=\lim_{h\rightarrow 0}\dfrac{1}{h}\left[(u_{t}(a_{h}\xi))^{u_{t}}-(u_{t}\xi)^{u_{t}}\right]
\\
=\lim_{h\rightarrow 0}\dfrac{1}{h}\left[u_{t}((a_{h}-a_{0})\xi)\right]^{u_{t}}
\\
=\left(u_{t}\left(\left[\lim_{h\rightarrow 0}\dfrac{a_{h}-a_{0}}{h}\right]\xi\right)\right)^{u_{t}}=(u_{t}(A\xi))^{u_{t}}.
\end{multline}
The result follows by substituting $u_{t}$ for $u$ given $\dot{u}_{t}=A^{*}(u_{t})$.
\end{proof}

\begin{cor}\label{cordevi1}
Let $(\xi_{j})_{j=1,...,n}$ be a basis of $F$, $B_{j}=B(\xi_{j})$ and $A\in\mathfrak{g}$, then the components of the matrix representation of $A$ with respect to $(\xi_{j})_{j=1,...,n}$ are given by the coefficients $\Gamma^{k}_{j}$ where
\begin{equation}
\nabla_{A^{*}}B_{j}=\Gamma^{k}_{j}B_{k}.
\end{equation}
\end{cor}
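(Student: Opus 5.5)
This is a direct consequence of Theorem \ref{teodevi} together with linearity. I will write $A$ in the basis $(\xi_{j})$, expand the vector $A\xi_{j}$, and then use that $\xi\mapsto B(\xi)$ is linear.

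First, I will apply \eqref{eqdevi1} with $\xi=\xi_{j}$, which gives
\[
\nabla_{A^{*}}B_{j}=B(A\xi_{j}).
\]
Since $A\in\mathfrak{g}=\mathfrak{gl}(n)$ acts linearly on $F=\mathbb{R}^{n}$, I write
\[
A\xi_{j}=a^{k}_{j}\xi_{k},
\]
where $(a^{k}_{j})$ is by definition the matrix of $A$ with respect to $(\xi_{j})$.

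Next, I check that $\xi\mapsto B(\xi)$ is linear. At each $u\in L(M)$, $B(\xi)(u)$ is the horizontal lift of $u\xi$. The map $\xi\mapsto u\xi$ is linear, and horizontal lifting $T_{\pi(u)}M\to T_{u}(L(M))$ is a linear isomorphism onto the horizontal subspace. Hence
\[
B(A\xi_{j})=a^{k}_{j}B(\xi_{k})=a^{k}_{j}B_{k}.
\]

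Finally, I compare this with $\nabla_{A^{*}}B_{j}=\Gamma^{k}_{j}B_{k}$. At each point the $B_{k}$ form a basis of the horizontal subspace, because $\theta$ restricted to horizontal vectors is an isomorphism onto $F$. So the coefficients are unique, and $\Gamma^{k}_{j}=a^{k}_{j}$. These are constants, independent of $u$, and they are exactly the matrix entries of $A$.

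The only point that needs care is the linear independence of the $B_{k}$, which is needed for the coefficients $\Gamma^{k}_{j}$ to be well defined. It follows from $\theta(B_{k})=\xi_{k}$ together with the fact that the $\xi_{k}$ form a basis of $F$.
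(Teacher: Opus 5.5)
Your proposal is correct and follows essentially the same route as the paper: apply Theorem \ref{teodevi} with $\xi=\xi_{j}$, use linearity of $\xi\mapsto u\xi$ and of horizontal lifting, and read off $\Gamma^{k}_{j}$ as the matrix entries of $A$. The only cosmetic difference is in the uniqueness step: you use the linear independence of the $B_{k}$, which follows from $\theta(B_{k})=\xi_{k}$, while the paper applies $\pi_{*}$ and uses that $u\colon F\to T_{\pi(u)}(M)$ is an isomorphism; these are the same fact, since $\theta=u^{-1}\circ\pi_{*}$.
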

\begin{proof}
By Theorem \ref{teodevi}
\[
\nabla_{A^{*}(u)}B_{j}=(u(A\xi_{j}))^{u}=\Gamma^{k}_{j}B_{k}(u)=\Gamma^{k}_{j}(u(\xi_{k}))^{u}=\left(u\left(\Gamma^{k}_{j}\xi_{k}\right)\right)^{u},
\]
where the horizontal lift $(\cdot)^{u}$ is linear given that $\pi_{*}$ is linear. Next we observe that
\[
u(A\xi_{j})=\pi_{*}\left((u(A\xi_{j}))^{u}\right)=\pi_{*}\left(\left(u\left(\Gamma^{k}_{j}\xi_{k}\right)\right)^{u}\right)=u\left(\Gamma^{k}_{j}\xi_{k}\right),
\]
as $u$ defines a isomorphism between $F$ and $T_{\pi(u)}(M)$ we have that
\[
A\xi_{j}=\Gamma^{k}_{j}\xi_{k}.
\]
\end{proof}

\begin{cor}\label{cordevi2}
Given a frame on $L(M)$ uniquely composed by fundamental and horizontal standard vector fields, the coefficients for the induced connection $\nabla$ associated with such fields are all constant. Also, for every basis $(\xi_{j})_{j=1,...,n}$ of $F$ there exists, modulo order, a unique basis $(A_{i})_{i=1,...,n^{2}}$ of $\mathfrak{g}$ such that the induced connection coefficients associated to the frame given by the fields $A^{*}_{i}$ and $B_{j}=B(\xi_{j})$ are such that $n^{2}$ of them are equal to one and the others equal to zero. As the dimension of a fiber is $n+n^{2}$, the total number of coefficients is $(n+n^{2})^{3}$.
\end{cor}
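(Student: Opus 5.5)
The plan is to compute the covariant derivative of every frame field along every other, using the three results already proved for the induced connection. Fix a basis $(A_i)_{i=1,\dots,n^2}$ of $\mathfrak{g}$ and a basis $(\xi_j)_{j=1,\dots,n}$ of $F$, and set $B_j=B(\xi_j)$. Since the fields $A_i^{*}$ and $B_j$ are pointwise linearly independent and there are $n^2+n=\dim L(M)$ of them, they form a global frame of $T(L(M))$. The connection coefficients are then the components of the derivatives $\nabla_{A_i^{*}}A_l^{*}$, $\nabla_{B_j}A_l^{*}$, $\nabla_{A_i^{*}}B_l$ and $\nabla_{B_j}B_l$ in this frame.

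By Lemma \ref{lemdicf}, the first two derivatives vanish identically. By Lemma \ref{lemdich}, the fourth vanishes as well. By Theorem \ref{teodevi} together with Corollary \ref{cordevi1}, $\nabla_{A_i^{*}}B_l=(A_i)^{k}_{l}B_k$, where $(A_i)^{k}_{l}$ are the entries of the matrix of $A_i$ in the basis $(\xi_j)$. These entries are numbers that do not depend on the point $u$. Hence every coefficient is constant, which proves the first assertion. Counting index triples $(\text{direction},\text{field},\text{component})$ over a frame with $n+n^2$ members gives $(n+n^2)^3$ coefficients.

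For existence, take the $A_i$ to be the elementary matrices $E_{kj}$ relative to $(\xi_j)$, defined by $E_{kj}\xi_l=\delta_{jl}\xi_k$. Then $\nabla_{E_{kj}^{*}}B_l=\delta_{jl}B_k$. So each of the $n^2$ fields $E_{kj}^{*}$ contributes exactly one coefficient equal to $1$, and all other coefficients are $0$. This gives exactly $n^2$ ones.

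For uniqueness modulo order, suppose $(A_i)$ is a basis of $\mathfrak{g}$ for which exactly $n^2$ coefficients equal $1$ and the rest equal $0$. The only possibly nonzero coefficients are the matrix entries $(A_i)^{k}_{l}$, so every matrix $A_i$ has entries in $\{0,1\}$, and the total number of entries equal to $1$ over all $i$ is $n^2$. Each $A_i$ is nonzero because the $A_i$ form a basis, so each $A_i$ has at least one entry equal to $1$. Since there are $n^2$ matrices sharing $n^2$ ones, each has exactly one, so each $A_i$ is an elementary matrix $E_{kj}$. Linear independence forces the $A_i$ to be distinct, and therefore they are all $n^2$ elementary matrices in some order.

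The main subtlety I expect is interpretive rather than computational. The statement must be read with ``equal to one'' meaning exactly $n^2$ coefficients equal to $1$ and all others zero. The uniqueness argument also relies on the vanishing of the other three families of derivatives, so that no coefficients other than the matrix entries $(A_i)^{k}_{l}$ can contribute ones.
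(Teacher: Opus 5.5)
Your proof is correct and follows the paper's route: the paper also reads off every coefficient from Lemmas \ref{lemdicf} and \ref{lemdich} and Corollary \ref{cordevi1}, and it takes the elementary basis $A_{n(k-1)+l}\xi_j=\delta^{k}_{j}\xi_l$ relative to $(\xi_j)$. The one difference is that the paper only exhibits this basis and never argues uniqueness modulo order. Your counting argument supplies that missing step: each entry lies in $\{0,1\}$, each $A_i\neq 0$, the $n^2$ ones are shared among $n^2$ matrices, and the $A_i$ are distinct by linear independence.
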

\begin{proof}
By Lemmas \ref{lemdicf} and \ref{lemdich} and Corollary \ref{cordevi1}, it is sufficient to consider the basis $(A_{i})_{i=1,...,n^{2}}$ of $\mathfrak{g}$ given by
\[
A_{i}\xi_{j}=\delta^{k}_{j}\xi_{l},
\]
where $i=n(k-1)+l$ with $k=1,..,n$ y $l=1,...,n$.
\end{proof}

\begin{defin}\label{defdevia}
A frame composed by fields $A^{*}_{i}$ and $B_{j}$ as described in the second part of Corollary \ref{cordevi2} will be called \textit{an adapted frame for the induced connection}.
\end{defin}

\begin{lem}[\cite{KN}, chapter 3, proposition 2.3, page 119]\label{lemconm}
Let $M$ be a manifold with a linear connection. Given a fundamental field $A^{*}$ and a horizontal standard field $B(\xi)$ on $L(M)$, the following holds
\begin{equation}\label{eqconm}
[A^{*},B(\xi)]=B(A\xi).
\end{equation}
\end{lem}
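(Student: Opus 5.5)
The identity $[A^{*},B(\xi)]=B(A\xi)$ only involves Lie brackets, so it does not depend on which connection on $L(M)$ we use to compute it. Still, the cleanest route in this paper is to use the induced connection $\nabla$ and its torsion. The plan is to compute $\nabla_{A^{*}}B(\xi)-\nabla_{B(\xi)}A^{*}$ with Theorem \ref{teodevi} and Lemma \ref{lemdicf}, which gives $B(A\xi)-0=B(A\xi)$. It then remains to show that the torsion $T^{\nabla}(A^{*},B(\xi))$ vanishes. That is not automatic, so I would prefer a direct argument and use the torsion computation only as a cross-check.

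\textbf{Step 1: the flow of $A^{*}$.} The flow of the fundamental field $A^{*}$ is $R_{a_{t}}$ with $a_{t}=\exp(tA)$. Hence
\[
[A^{*},B(\xi)](u)=\lim_{t\to 0}\frac{1}{t}\bigl[(R_{a_{-t}})_{*}B(\xi)(ua_{t})-B(\xi)(u)\bigr]
=-\frac{d}{dt}\Big|_{t=0}(R_{a_{-t}})_{*}^{-1}\ldots
\]
More precisely, $[A^{*},Y]=\frac{d}{dt}\big|_{t=0}(R_{a_{t}}^{-1})_{*}Y(ua_{t})$, the Lie derivative along the flow.

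\textbf{Step 2: how $R_{a}$ acts on standard horizontal fields.} I would prove $(R_{a})_{*}B(\xi)=B(a^{-1}\xi)$. The connection is $G$-invariant, so $(R_{a})_{*}$ maps horizontal vectors to horizontal vectors. Also $\pi\circ R_{a}=\pi$, so $\pi_{*}$ is unchanged. Using $\theta_{ua}((R_{a})_{*}X)=(ua)^{-1}\pi_{*}X=a^{-1}u^{-1}\pi_{*}X$, we get $\theta((R_{a})_{*}B(\xi))=a^{-1}\xi$. Since the standard horizontal field is the unique horizontal field with prescribed $\theta$-value, the claim follows.

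\textbf{Step 3: differentiate.} Step 2 gives
\[
(R_{a_{t}}^{-1})_{*}B(\xi)(ua_{t})=B(a_{t}\xi)(u).
\]
Differentiating at $t=0$ and using that $\eta\mapsto B(\eta)(u)$ is linear yields $B(A\xi)(u)$. This proves \eqref{eqconm}.

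\textbf{Main obstacle: signs.} The delicate point is the sign conventions: the direction of the Lie derivative, $R_{a}$ versus $R_{a^{-1}}$, and $a$ versus $a^{-1}$ in $\theta$. I would check the result against Theorem \ref{teodevi}. It gives $\nabla_{A^{*}}B(\xi)=B(A\xi)$, which is consistent with the torsion of $\nabla$ vanishing on this pair, so the two computations should agree. That agreement serves as the sign check.
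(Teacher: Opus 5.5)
Your Steps 1--3 are correct and are essentially the proof in the cited source (Kobayashi--Nomizu, Ch.~III, Prop.~2.3), which the paper invokes without reproducing: the flow of $A^{*}$ is $R_{a_t}$, the equivariance $(R_{a})_{*}B(\xi)=B(a^{-1}\xi)$ gives $(R_{a_t}^{-1})_{*}B(\xi)_{ua_t}=B(a_t\xi)_u$, and differentiating at $t=0$ yields $B(A\xi)$ with the standard convention $[X,Y]=\mathcal{L}_XY$. Your torsion ``sign check'' is not an independent check: given Theorem~\ref{teodevi} and Lemma~\ref{lemdicf}, vanishing of $T(A^{*},B(\xi))$ is equivalent to the lemma itself, and the paper's Theorem~\ref{teotori} actually relies on this lemma; the direct computation does not need it, though.
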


\begin{lem}\label{lemtorch}
Let $M$ be a manifold with linear connection. Given two horizontal standard fields $B(\xi)$ and $B(\zeta)$ on $L(M)$
\begin{equation}\label{eqtorch}
T(B(\xi),B(\zeta))=-[B(\xi),B(\zeta)].
\end{equation}
\end{lem}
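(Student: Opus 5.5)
The torsion is given by $T(X,Y)=\nabla_{X}Y-\nabla_{Y}X-[X,Y]$, where $\nabla$ is the induced connection on $L(M)$. I apply this with $X=B(\xi)$ and $Y=B(\zeta)$ and show that the two covariant derivative terms vanish. Then only $-[B(\xi),B(\zeta)]$ is left.

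First I need the sign convention, because the paper uses the Kobayashi--Nomizu framework. The covariant derivative here is defined by parallel transport, and the torsion of a linear connection on a manifold (in our case the manifold $L(M)$ with the connection determined by the induced transport of Theorem \ref{teotrpi}) is $T(X,Y)=\nabla_{X}Y-\nabla_{Y}X-[X,Y]$ for vector fields $X,Y$ (\cite{KN}, chapter 3, Theorem 5.1). This is the identity I will use.

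Second, Lemma \ref{lemdich} gives $\nabla_{B(\xi)}B(\zeta)=0$ and also, with the roles exchanged, $\nabla_{B(\zeta)}B(\xi)=0$, for all $\xi,\zeta\in F$. The lemma is stated for every pair of elements of $F$, so both orders are covered at once. Substituting these into the torsion formula gives
\[
T(B(\xi),B(\zeta))=0-0-[B(\xi),B(\zeta)]=-[B(\xi),B(\zeta)],
\]
which is \eqref{eqtorch}.

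The main point to check is that the $\nabla$ of Lemma \ref{lemdich} really is the covariant derivative of a linear connection on $L(M)$, so that the torsion formula applies. This holds because the induced transport is linear, and Theorem \ref{teotrpi} calls it a linear connection on $T(L(M))$. If more detail is wanted, I would also note that $[B(\xi),B(\zeta)]$ is vertical and equals $-(2\Omega(B(\xi),B(\zeta)))^{*}$ by the structure equations (\cite{KN}, chapter 3, Proposition 2.3). This identifies the torsion of the induced connection with the curvature of the base connection, though the lemma as stated does not need it.
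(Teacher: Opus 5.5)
Your proof is correct and is essentially the paper's own: expand $T(B(\xi),B(\zeta))=\nabla_{B(\xi)}B(\zeta)-\nabla_{B(\zeta)}B(\xi)-[B(\xi),B(\zeta)]$ with the Kobayashi--Nomizu torsion formula, and both covariant derivatives vanish by Lemma \ref{lemdich}. One caution about your optional closing aside: by the first structure equation $\theta([B(\xi),B(\zeta)])=-2\Theta(B(\xi),B(\zeta))$, so $[B(\xi),B(\zeta)]$ is vertical (equal to $-(2\Omega(B(\xi),B(\zeta)))^{*}$) only when the base connection is torsion-free, and Proposition 2.3 of Kobayashi--Nomizu is the bracket identity $[A^{*},B(\xi)]=B(A\xi)$, not the structure equations.
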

\begin{proof}
The result follows from theorem 2.4 from (\cite{KN}, chapter 3, page 120) and lemma \ref{lemdich}.
\end{proof}

\begin{thm}\label{teotori}
For an adapted frame (definition \ref{defdevia}) composed by fields $A_{(\alpha,\beta)}^{*}$ and $B_{j}=B(\xi_{j})$ where the vectors $\xi_{j}$ form a basis of $F$, and the subindices $(\alpha,\beta)$ are organized so that
\[
A_{(\alpha,\beta)}\xi_{k}=\delta_{k}^{\alpha}\xi_{\beta}.
\]
Then the connection coefficients are all zero except for those of the form $\Gamma_{(\alpha,\beta)\alpha}^{\beta}=1$. Let $X=X^{i}B_{i}+X^{(\alpha,\beta)}A_{(\alpha,\beta)}^{*}$ and $Y=Y^{j}B_{j}+Y^{(\gamma,\epsilon)}A_{(\gamma,\epsilon)}^{*}$, the induced torsion is given by
\begin{equation}\label{eqtori}
T(X,Y)=\Sigma_{j}(X^{(i,j)}Y^{(j,k)}-Y^{(i,j)}X^{(j,k)})A_{(i,k)}^{*}-X^{i}Y^{j}[B_{i},B_{j}].
\end{equation}
\end{thm}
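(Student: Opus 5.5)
The plan has two parts. First read off the connection coefficients in the adapted frame from the earlier results. Then compute the torsion using the tensorial formula
\[
T(X,Y)=\nabla_{X}Y-\nabla_{Y}X-[X,Y].
\]
Since $T$ is $C^{\infty}$-bilinear, it suffices to evaluate it on pairs of frame fields and then expand $X$ and $Y$ in components.

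\emph{Coefficients.} By Lemma \ref{lemdicf}, $\nabla_{X}A^{*}_{(\gamma,\epsilon)}=0$ for every $X$, and by Lemma \ref{lemdich}, $\nabla_{B_{i}}B_{j}=0$. The only remaining derivatives are $\nabla_{A^{*}_{(\alpha,\beta)}}B_{k}$. By Theorem \ref{teodevi} this equals $B(A_{(\alpha,\beta)}\xi_{k})=\delta^{\alpha}_{k}B(\xi_{\beta})=\delta^{\alpha}_{k}B_{\beta}$. So the only nonzero coefficients are $\Gamma^{\beta}_{(\alpha,\beta)\alpha}=1$, which is the first claim; this also agrees with Corollary \ref{cordevi1}.

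\emph{Torsion on frame pairs.} There are three cases.
\begin{enumerate}
\item Two horizontal standard fields: $T(B_{i},B_{j})=-[B_{i},B_{j}]$, exactly Lemma \ref{lemtorch}.
\item Mixed pair: $T(A^{*},B(\xi))=\nabla_{A^{*}}B(\xi)-\nabla_{B(\xi)}A^{*}-[A^{*},B(\xi)]=B(A\xi)-0-B(A\xi)=0$, using Theorem \ref{teodevi}, Lemma \ref{lemdicf} and Lemma \ref{lemconm}.
\item Two fundamental fields: both covariant derivatives vanish, so $T(A^{*},C^{*})=-[A^{*},C^{*}]=-([A,C])^{*}$. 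This uses the standard fact from \cite{KN} that $A\mapsto A^{*}$ is a Lie algebra homomorphism.
\end{enumerate}
Consequently, for general $X$ and $Y$,
\[
T(X,Y)=-X^{(a,b)}Y^{(c,d)}\,[A_{(a,b)},A_{(c,d)}]^{*}-X^{i}Y^{j}[B_{i},B_{j}].
\]

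\emph{Main step: matrix brackets in the $(\alpha,\beta)$ labelling.} This is the only place requiring care, because the index conventions are easy to confuse. From $A_{(\alpha,\beta)}\xi_{k}=\delta^{\alpha}_{k}\xi_{\beta}$, I will check on the basis vectors $\xi_{k}$ that
\[
A_{(a,b)}A_{(c,d)}=\delta_{ad}A_{(c,b)} .
\]
Indeed, $A_{(c,d)}$ sends $\xi_{c}$ to $\xi_{d}$, and $A_{(a,b)}$ then sends this to $\xi_{b}$ exactly when $d=a$. Hence
\[
[A_{(a,b)},A_{(c,d)}]=\delta_{ad}A_{(c,b)}-\delta_{bc}A_{(a,d)}.
\]
Substituting into the torsion expression, the first term contributes $-\sum X^{(j,k)}Y^{(i,j)}A^{*}_{(i,k)}$, after renaming $c\to i$, $a=d\to j$, $b\to k$. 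The second term contributes $+\sum X^{(i,j)}Y^{(j,k)}A^{*}_{(i,k)}$, after renaming $a\to i$, $b=c\to j$, $d\to k$. Together these give exactly the vertical part of \eqref{eqtori}, with the horizontal part being $-X^{i}Y^{j}[B_{i},B_{j}]$. I expect the only real obstacle to be this bookkeeping of which index is contracted and the resulting overall sign. I will double-check it on a single pair, such as $A_{(1,2)}$ and $A_{(2,1)}$.
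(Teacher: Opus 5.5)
Your proposal is correct and follows essentially the same route as the paper. It uses the same ingredients: Lemmas \ref{lemdicf}, \ref{lemdich}, \ref{lemconm}, Theorem \ref{teodevi}, and the fact from \cite{KN} that $A\mapsto A^{*}$ is a Lie algebra homomorphism. Your bracket identity $[A_{(a,b)},A_{(c,d)}]=\delta_{ad}A_{(c,b)}-\delta_{bc}A_{(a,d)}$ agrees with the paper's, and the index bookkeeping checks out. The only difference is that you use $C^{\infty}$-bilinearity of $T$ to reduce to frame pairs, whereas the paper expands $\nabla_{X}Y-\nabla_{Y}X$ with variable coefficients and reassembles the derivative terms into $[X,Y]$ via the Leibniz rule for brackets; your version is cleaner but not substantively different.
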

\begin{proof}
Applying equation $\nabla_{X}Y=X^{i}\left(e_{i}Y^{k}+\Gamma_{ij}^{k}Y^{j}\right)e_{k}$ for any frame $\{e_{i}\}$, we get
\begin{multline}\label{eqmult2}
\nabla_{X}Y=X^{i}(B_{i}Y^{j})B_{j}+X^{(\alpha,\beta)}(A_{(\alpha,\beta)}^{*}Y^{j})B_{j}+X^{i}(B_{i}Y^{(\gamma,\epsilon)})B_{(\gamma,\epsilon)}
\\
+X^{(\alpha,\beta)}(A_{(\alpha,\beta)}^{*}Y^{(\gamma,\epsilon)})B_{(\gamma,\epsilon)}+\Gamma_{(\alpha,\beta)\alpha}^{\beta}X^{(\alpha,\beta)}Y^{\alpha}B_{\beta}.
\end{multline}
Also, for any pair of fields $Z$ and $W$, and any pair of scalar functions $f$ and $g$
\[
[fZ,gW]=f(Zg)W-g(Wf)Z+fg[Z,W].
\]
So when calculating $\nabla_{X}Y-\nabla_{Y}X$, we can group the first four pairs of terms, adding and subtracting expressions of the form $X^{i}Y^{j}[B_{i},B_{j}]$ to form brackets, obtaining
\begin{multline}\label{eqmult3}
\nabla_{X}Y-\nabla_{Y}X=[X^{i}B_{i},Y^{j}B_{j}]-X^{i}Y^{j}[B_{i},B_{j}]
\\
+[X^{(\alpha,\beta)}A_{(\alpha,\beta)}^{*},Y^{j}B_{j}]-X^{(\alpha,\beta)}Y^{j}[A_{(\alpha,\beta)}^{*},B_{j}]
\\
+[X^{i}B_{i},Y^{(\gamma,\epsilon)}B_{(\gamma,\epsilon)}]-X^{i}Y^{(\gamma,\epsilon)}[B_{i},A_{(\gamma,\epsilon)}^{*}]
\\
+[X^{(\alpha,\beta)}A_{(\alpha,\beta)}^{*},Y^{(\gamma,\epsilon)}B_{(\gamma,\epsilon)}]-X^{(\alpha,\beta)}Y^{(\gamma,\epsilon)}[A_{(\alpha,\beta)}^{*},A_{(\gamma,\epsilon)}^{*}]
\\
+\Sigma_{\alpha}(X^{(\alpha,\beta)}Y^{\alpha}-Y^{(\alpha,\beta)}X^{\alpha})B_{\beta}.
\end{multline}
Applying the bilinearity of the bracket, Lemma \ref{lemconm} and Proposition 4.1 from (\cite{KN}, chapter 1, page 42) we obtain
\begin{multline}\label{eqmult4}
\nabla_{X}Y-\nabla_{Y}X=[X,Y]+\Sigma_{i}(X^{(i,j)}Y^{i}-Y^{(i,j)}X^{i})B_{j}
\\
+(X^{i}Y^{(j,k)}-Y^{i}X^{(j,k)})B(A_{(j,k)}\xi_{i})-X^{(i,j)}Y^{(l,k)}[A_{(i,j)},A_{(l,k)}]^{*}-X^{i}Y^{j}[B_{i},B_{j}].
\end{multline}
Given that $B(A_{(j,k)}\xi_{i})=\delta_{i}^{j}B_{k}$ y $[A_{(i,j)},A_{(l,k)}]^{*}=\delta_{i}^{k}A_{(l,j)}^{*}-\delta_{j}^{l}A_{(i,k)}^{*}$ we obtain
\[
\nabla_{X}Y-\nabla_{Y}X=[X,Y]+\Sigma_{j}(X^{(i,j)}Y^{(j,k)}-Y^{(i,j)}X^{(j,k)})A_{i,k}^{*}-X^{i}Y^{j}[B_{i},B_{j}].
\]
The result follows by applying theorem 5.1 from (\cite{KN}, chapter 3, page 133).
\end{proof}

\begin{thm}\label{teocurvi}
For an adapted frame composed by the fields $A_{(\alpha,\beta)}^{*}$ and $B_{j}$. Let $X=X^{i}B_{i}+X^{(\alpha,\beta)}A_{(\alpha,\beta)}^{*}$, $Y=Y^{j}B_{j}+Y^{(\gamma,\epsilon)}A_{(\gamma,\epsilon)}^{*}$ and $Z=Z^{k}B_{k}+Z^{(\rho,\lambda)}A_{(\rho,\lambda)}^{*}$, the induced curvature is given by
\begin{equation}\label{eqcurvi}
R(X,Y)Z=-X^{i}Y^{j}Z^{k}\nabla_{[B_{i},B_{j}]}B_{k}.
\end{equation}
\end{thm}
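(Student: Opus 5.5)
Since $R$ is a tensor, I will prove the formula on the frame fields and then extend trilinearly. It suffices to compute $R(E_{a},E_{b})E_{c}$ for $E_{a},E_{b},E_{c}$ ranging over $A^{*}_{(\alpha,\beta)}$ and $B_{j}$. The claimed formula $R(X,Y)Z=-X^{i}Y^{j}Z^{k}\nabla_{[B_{i},B_{j}]}B_{k}$ then follows. I use
\[
R(E_{a},E_{b})E_{c}=\nabla_{E_{a}}\nabla_{E_{b}}E_{c}-\nabla_{E_{b}}\nabla_{E_{a}}E_{c}-\nabla_{[E_{a},E_{b}]}E_{c}.
\]
The ingredients are:
\begin{itemize}
\item Lemma \ref{lemdicf}: $\nabla A^{*}=0$.
\item Lemma \ref{lemdich}: $\nabla_{B(\xi)}B(\zeta)=0$.
\item Theorem \ref{teodevi}: $\nabla_{A^{*}}B(\xi)=B(A\xi)$.
\item Lemma \ref{lemconm}: $[A^{*},B(\xi)]=B(A\xi)$.
\item The standard fact $[A^{*},C^{*}]=[A,C]^{*}$.
\end{itemize}
All of these have constant coefficients in the adapted frame, so the first two terms reduce to iterating these rules.

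\textbf{Case $E_{c}=A^{*}$.} All three terms vanish because $\nabla A^{*}=0$. Hence $R(\cdot,\cdot)A^{*}=0$, which is consistent with the factor $Z^{k}$ in the formula.

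\textbf{Case $E_{c}=B(\xi)$, $E_{a}=A^{*}$, $E_{b}=C^{*}$.} The three terms are
\[
B(AC\xi)-B(CA\xi)-B([A,C]\xi)=0.
\]

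\textbf{Case $E_{c}=B(\xi)$, $E_{a}=A^{*}$, $E_{b}=B(\zeta)$.} The first term is $\nabla_{A^{*}}0=0$. The second term is $\nabla_{B(\zeta)}B(A\xi)=0$ by Lemma \ref{lemdich}. The last term is $\nabla_{[A^{*},B(\zeta)]}B(\xi)=\nabla_{B(A\zeta)}B(\xi)=0$. So this case vanishes, and by antisymmetry so does the case with $E_{a}$ and $E_{b}$ swapped.

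\textbf{Case $E_{c}=B_{k}$, $E_{a}=B_{i}$, $E_{b}=B_{j}$.} The first two terms vanish by Lemma \ref{lemdich}, leaving $-\nabla_{[B_{i},B_{j}]}B_{k}$.

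The only step needing care is this last case. The bracket $[B_{i},B_{j}]$ is not a constant-coefficient combination of frame fields; it is vertical, equal to $-(2\Omega(B_{i},B_{j}))^{*}$. The formula as stated does not require evaluating it, so I keep it in the form $\nabla_{[B_{i},B_{j}]}B_{k}$. Since the frame-field computation only evaluates $\nabla$ at a point in the direction $[B_{i},B_{j}]$, no derivatives of coefficients enter.

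Collecting the cases and using tensoriality gives
\[
R(X,Y)Z=\sum X^{i}Y^{j}Z^{k}R(B_{i},B_{j})B_{k}=-X^{i}Y^{j}Z^{k}\nabla_{[B_{i},B_{j}]}B_{k}.
\]
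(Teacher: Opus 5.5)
Your proof is correct and is essentially the paper's argument. The paper uses tensoriality to take $X,Y,Z$ with constant coefficients and expands $[\nabla_X,\nabla_Y]Z-\nabla_{[X,Y]}Z$ in the adapted frame, where the vertical--vertical and mixed terms cancel through the same ingredients you list; you do the same bookkeeping pair by pair on frame fields, keeping $[A,C]^{*}$ abstract, which is a bit cleaner.

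One correction to a side remark you never actually use: $[B_i,B_j]$ is vertical only when the base connection is torsion-free. In general, with Kobayashi--Nomizu conventions, $[B_i,B_j]=-2B(\Theta(B_i,B_j))-(2\Omega(B_i,B_j))^{*}$. The horizontal part contributes nothing to $\nabla_{[B_i,B_j]}B_k$ by Lemma \ref{lemdich}, so your proof is unaffected.
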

\begin{proof}
As a tensor field, curvature depends on $X$, $Y$ and $Z$ as vectors, not as fields, so we may assume that the coefficients of the fields are constant, then
\begin{multline}\label{eqmult5}
[X,Y]=X^{i}Y^{j}[B_{i},B_{j}]+X^{(\alpha,\beta)}Y^{j}[A_{(\alpha,\beta)}^{*},B_{j}]+X^{i}Y^{(\gamma,\epsilon)}[B_{i},A_{(\gamma,\epsilon)}^{*}]+X^{(\alpha,\beta)}Y^{(\gamma,\epsilon)}[A_{(\alpha,\beta)}^{*},A_{(\gamma,\epsilon)}^{*}]
\\
=X^{i}Y^{j}[B_{i},B_{j}]+(X^{(\alpha,\beta))}Y^{i}-X^{i}Y^{(\alpha,\beta)})B(A_{(\alpha,\beta)}\xi_{i})+X^{(\alpha,\beta)}Y^{(\gamma,\epsilon)}[A_{(\alpha,\beta)},A_{(\gamma,\epsilon)}]^{*}
\\
=X^{i}Y^{j}[B_{i},B_{j}]+\Sigma_{i}(X^{(i,j)}Y^{i}-X^{i}Y^{(i,j)})B_{j}+X^{(i,j)}Y^{(k,l)}(\delta^{l}_{i}A_{(k,j)}-\delta^{j}_{k}A_{(i,l)})^{*}
\\
=X^{i}Y^{j}[B_{i},B_{j}]+\Sigma_{i}(X^{(i,j)}Y^{i}-X^{i}Y^{(i,j)})B_{j}+(X^{(k,j)}Y^{(i,k)}-X^{(i,k)}Y^{(k,j)})A_{(i,j)}^{*}.
\end{multline}
As we are assuming constant coefficients, and because the induced covariant derivative of a fundamental field in any direction, as well as the induced covariant derivative of a horizontal standard field in horizontal directions, are null, we have
\begin{multline}\label{eqmult6}
\nabla_{[X,Y]}Z=\nabla_{[X,Y]}Z^{k}B_{k}=X^{i}Y^{j}Z^{k}\nabla_{[B_{i},B_{j}]}B_{k}+(X^{(k,j)}Y^{(i,k)}-X^{(i,k)}Y^{(k,j)})Z^{k}\nabla_{A_{(i,j)}^{*}}B_{k}
\\
=X^{i}Y^{j}Z^{k}\nabla_{[B_{i},B_{j}]}B_{k}+(X^{(k,j)}Y^{(i,k)}-X^{(i,k)}Y^{(k,j)})Z^{k}B(A_{(i,j)}\xi_{k})
\\
=X^{i}Y^{j}Z^{k}\nabla_{[B_{i},B_{j}]}B_{k}+\Sigma_{i}\Sigma_{k}(X^{(k,j)}Y^{(i,k)}-X^{(i,k)}Y^{(k,j)})Z^{i}B_{j}.
\end{multline}
Also, given that $\Gamma_{(i,j)l}^{k}=\delta_{i}^{l}\delta_{j}^{k}$ and all the other induced connection coefficients are zero
\begin{multline}\label{eqmult7}
\nabla_{X}\nabla_{Y}Z=\nabla_{X}(Y^{(i,j)}\Gamma_{(i,j)l}^{k}Z^{l})B_{k}=\nabla_{X}(\Sigma_{i}Y^{(i,j)}Z^{i})B_{j}
\\
=(\Sigma_{i}X^{(\alpha,\beta)}\Gamma_{(\alpha,\beta)\gamma}^{\epsilon}Y^{(i,\gamma)}Z^{i})B_{\epsilon}=\Sigma_{i}\Sigma_{k}X^{(i,j)}Y^{(k,i)}Z^{k}B_{j},
\end{multline}
and then
\[
[\nabla_{X},\nabla_{Y}]Z=\Sigma_{i}\Sigma_{k}(X^{(i,j)}Y^{(k,i)}-X^{(k,i)}Y^{(i,j)})Z^{k}B_{j}.
\]
Because of Theorem 5.1 from (\cite{KN}, chapter 3, page 133) we get
\[
R(X,Y)Z=-X^{i}Y^{j}Z^{k}\nabla_{[B_{i},B_{j}]}B_{k}.
\]
\end{proof}
\begin{rem}\label{obscurvi}
The induced curvature is always horizontal and depends only on the horizontal components of $X$, $Y$ and $Z$, and hence if any of them is vertical, the induced curvature is zero.
\end{rem}

\section{Classification of lifts}

\begin{rem}\label{obsclal1}
In a general sense, we can define in many ways the lift of a linear connection $\widetilde{\nabla}$ on a differential manifold $M$. Given two vectors $X\in TM$, $Z\in T(L(M))$ and a field $Y$ on $M$, let $X^{h}$ and $Y^{h}$ be the horizontal lifts of $X$ and $Y$ on $L(M)$, we can define different kinds of lifts $\nabla$, some with more restrictive conditions

(0) $\pi_{*}\left(\nabla_{X^{h}}Y^{h}\right)=\widetilde{\nabla}_{X}Y$,

(1) $\pi_{*}\left(\nabla_{Z}Y^{h}\right)=\widetilde{\nabla}_{\pi_{*}(Z)}Y$,

(2) $\nabla_{Z}Y^{h}=(\widetilde{\nabla}_{\pi_{*}(Z)}Y)^{h}$,

\noindent
where $(\cdot)^{h}$ represents the horizontal lift of tangent vectors. Clearly each equation is more restrictive than the previous one, given that $Z$ is not necessarily horizontal. Also when comparing on $L(M)$ the vertical components have to coincide. We can have a more restrictive condition out of equation (1) by considering $Y\in\mathfrak{X}(L(M))$ as any projectable field, no necessarily horizontal, meaning

(3) $\pi_{*}\left(\nabla_{Z}Y\right)=\widetilde{\nabla}_{\pi_{*}(Z)}\pi_{*}\left(Y\right)$.
\end{rem}

\begin{defin}\label{defcmel}
Conditions for a lift imposed by equations (0) and (1) on remark $\ref{obsclal1}$, where $Y^{h}$ is strictly horizontal, will be called \textit{minimal condition} and \textit{special condition} respectively.
\end{defin}

\begin{defin}\label{defclal}
We define different \textit{lift classes} identified by the following conditions imposed on a lift $\nabla$ of a linear connection $\widetilde{\nabla}$ over a differentiable manifold $M$ in terms of parallel transport on $L(M)$

(0) $\nabla$ satisfies minimal condition for a lift.

(1) $\nabla$ satisfies special condition for a lift.

(2) $\nabla$ satisfies (1) and also parallel transport preserves horizontality.

(3) $\nabla$ satisfies (1) and also parallel transport preserves verticality.

(4) $\nabla$ satisfies (1) and also $\omega(\nabla_{X}A^{*})=0$ for any vector $X$ and fundamental field $A^{*}$.

(5) $\nabla$ satisfies (1) and also parallel transport preserves both horizontality and verticality.

(6) $\nabla$ satisfies (1) and also parallel transport preserves the connection form.

(7) $\nabla$ satisfies (1) and also parallel transport preserves fundamental fields.

(8) $\nabla$ is the unique induced parallel transport.
\end{defin}

\begin{rem}\label{obsclal2}
Lift classes (0), (1), (2) and (3) are determined by equations (0), (1), (2) and (3) from remark $\ref{obsclal1}$.
\end{rem}

\begin{lem}\label{lemclal}
Lift classes satisfy the following relations

(1) Intersection of classes two and three equal class five.

(2) Intersection of classes two and four equal class six.

(3) Intersection of classes three and four equal class seven.

(4) Intersection of classes two, three and four equal class eight.
\end{lem}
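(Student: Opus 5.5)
We compare, for each pair (or triple) of classes, what the defining conditions force on the parallel transport of a horizontal vector and of a vertical vector. Throughout, a lift $\nabla$ in class (1) is fixed, and linearity of parallel transport lets us treat horizontal and vertical vectors separately.

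\textbf{Item (1).} This is essentially definitional. Class (5) is defined as class (1) plus preservation of both horizontality and verticality. Class (2) adds horizontality and class (3) adds verticality. So the intersection of (2) and (3) is exactly (5).

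\textbf{Key translation for items (2) and (3).} For a lift in class (1) we first reformulate condition (4) of Definition \ref{defclal}. Since $\omega$ restricted to vertical vectors is the linear isomorphism $A^{*}(u)\mapsto A$, the condition $\omega(\nabla_{X}A^{*})=0$ for all $X$ and $A$ says that the vertical part of $\nabla A^{*}$ vanishes. In terms of parallel transport $\tau$ along any curve, this means
\[
\omega(\tau(A^{*}(u)))=A,
\]
that is, $\omega$ is preserved on vertical vectors. To prove this, differentiate $\omega(\tau_t^{-1}\ldots)$ along the curve and use that $\omega(A^{*})$ is constant. Conversely, preservation of $\omega$ on vertical vectors gives class (4) by the definition of the covariant derivative via transport (\cite{KN}, chapter 3).

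\textbf{Item (2).} Assume $\nabla\in(2)\cap(4)$. A horizontal vector stays horizontal, so its $\omega$-value stays $0$. A vertical vector $A^{*}(u)$ is sent to a vector whose vertical part is $A^{*}(w)$. Its horizontal part is then pinned down by the special condition: the projection of $\tau(A^{*}(u))$ must equal $\widetilde{\tau}(\pi_{*}A^{*}(u))=0$. Here I expect to use the class (1) condition applied to fields built from $A^{*}$ plus horizontal lifts. Hence the transported vector is vertical, and $\omega$ is preserved on all vectors, giving class (6). Conversely, class (6) preserves $\ker\omega$, so horizontality is preserved, and restricting $\omega$-preservation to fundamental fields gives (4).

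\textbf{Item (3).} The argument is analogous. Verticality plus $\omega$-preservation on vertical vectors means $\tau(A^{*}(u))=A^{*}(w)$, i.e.\ fundamental fields are parallel, which is class (7). Conversely, parallel fundamental fields preserve verticality and give $\nabla A^{*}=0$, hence (4).

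\textbf{Item (4).} By items (2) and (3), the intersection of (2), (3) and (4) has $\tau(A^{*}(u))=A^{*}(w)$ and horizontal vectors going to horizontal vectors. By the special condition, the image of a horizontal vector $Y^{h}$ is the horizontal lift of $\widetilde{\tau}(\pi_{*}Y^{h})$. Linearity then gives exactly formula \eqref{eqtrpi}, so by Theorem \ref{teotrpi} (uniqueness via properties (1) and (4)) it is the induced transport, class (8). Conversely, the induced transport satisfies all these conditions by Theorem \ref{teotrpi} and Lemma \ref{lemdicf}.

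\textbf{Main obstacle.} The delicate step is showing that, under the special condition, the transported horizontal vector is the horizontal lift of the transported base vector. This requires passing from the field-level statement $\pi_{*}(\nabla_{Z}Y^{h})=\widetilde{\nabla}_{\pi_{*}Z}Y$ to a transport-level statement $\pi_{*}\circ\tau=\widetilde{\tau}\circ\pi_{*}$ on horizontal vectors. I would prove it by applying the special condition to a horizontally lifted $\widetilde{\tau}$-parallel field along the curve, and then invoking uniqueness of solutions of the parallel-transport ODE.
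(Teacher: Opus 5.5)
Your plan of unpacking each defining condition as a property of parallel transport is what the paper's one-line proof (``all follows from the definitions'') intends, and the conclusions of all four items are true. But two steps in your write-up fail.

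\textbf{The ``key translation'' is false for a general lift in class (1).} Let $V$ be parallel along $\gamma$ with $V(0)=A^{*}(u)$, and write $V=H+B(t)^{*}$ with $H$ horizontal and $B(t)=\omega(V(t))$. Then
\[
0=\omega(\nabla_{\dot\gamma}V)=\omega(\nabla_{\dot\gamma}H)+\dot B+B^{i}\,\omega(\nabla_{\dot\gamma}A_{i}^{*}),
\]
and condition (4) kills only the last term. Neither (1) nor (4) constrains the coefficients $h(\nabla_{X}A_{i}^{*})$, so $H$ can become nonzero. Nor do they constrain $v(\nabla_{X}B_{j})$, so $\omega(\nabla_{\dot\gamma}H)$ can be nonzero. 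For instance, add to the induced connection $\nabla^{0}$ a tensor $S$ with $S(\cdot,B_{j})$ vertical and $S(\cdot,A_{i}^{*})$ horizontal, both nonzero. This gives a lift in class (4) for which in general $\ddot B(0)\neq 0$, so $\omega$ is not preserved on vertical vectors. Your differentiation argument tacitly assumes $H\equiv 0$.

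The translation does hold under an extra hypothesis. Under (3), $H\equiv 0$. Under (2), transporting a horizontal basis shows the horizontal distribution is parallel, so $\nabla_{\dot\gamma}H$ is horizontal and $\omega(\nabla_{\dot\gamma}H)=0$. Items (2) and (3) of the lemma have exactly these hypotheses available, so invoke them there instead of claiming the translation for all of class (1).

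\textbf{The step in item (2) that makes the transported vertical vector vertical is wrong.} The special condition only constrains $\nabla$ on horizontal lifts $Y^{h}$, i.e.\ the coefficients $h(\nabla_{X}B_{j})$ of Theorem \ref{teoclal}. It says nothing about $\nabla A^{*}$. A field such as $Y^{h}+A^{*}$ is covered only by the stronger equation (3) of Remark \ref{obsclal1}, which characterizes class (3). If your claim held, it would give $(2)\cap(4)\subset(3)$, making class (6) equal to class (8). But $\nabla^{0}+S$ with $S(\cdot,B_{j})=0$ and $S(\cdot,A_{i}^{*})$ horizontal and nonzero lies in class (6) and does not preserve verticality.

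Fortunately the step is superfluous. Horizontal vectors go to horizontal vectors, and by the corrected computation above (using (2)) $\omega(\tau(A^{*}(u)))=A$. Linearity then already gives $\omega\circ\tau=\omega$.

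\textbf{Item (4): two points to make explicit.}
\begin{itemize}
\item You use (2) to pass from $\pi_{*}(\nabla_{\dot\gamma}W)=0$ to $\nabla_{\dot\gamma}W=0$ for the lifted $\widetilde{\tau}$-parallel field $W$: $\nabla_{\dot\gamma}W$ is horizontal with zero projection, hence zero.
\item The special condition, stated for fields on $M$, extends to fields along $\pi\circ\gamma$ by the Leibniz rule in a local frame.
\end{itemize}
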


\begin{proof}
All follows from the definitions.
\end{proof}

\begin{rem}\label{obsclal3}
Let us consider additional conditions for a linear connection on $L(M)$, which are not enough to have a lift but do help us better understand the relations among classes

(a) Parallel transport preserves horizontality.

(b) Parallel transport preserves verticality.

(c) For any vector $X$ and fundamental field $A^{*}$, $\omega(\nabla_{X}A^{*})=0$.

\noindent
Clearly, classes (2), (3) and (4) correspond with class one restricted by conditions (a), (b) or (c) respectively.
\end{rem}

\begin{thm}\label{teoclal}
Let the fields $A^{*}_{i}$ and $B_{j}=B(\xi_{j})$ conform a frame composed of fundamental and standard horizontal fields exclusively. Let $X$ be any of such fields, we divide the coefficients of any connection $\nabla$ on $L(M)$ for the previous frame in four sets, determined by correspondence with one of the following expressions

(1) $h(\nabla_{X}B_{j})$,

(2) $v(\nabla_{X}B_{j})$,

(3) $h(\nabla_{X}A^{*}_{i})$,

(4) $v(\nabla_{X}A^{*}_{i})$.

\noindent
The first set is determined according to Lemma $\ref{lemdich}$ and Corollary $\ref{cordevi1}$ if and only if $\nabla$ satisfies the special condition. The other three sets of coefficients correspond respectively with conditions (a), (b) and (c) from remark $\ref{obsclal3}$ so that the condition is satisfied if and only if the coefficients of the corresponding set are all equal to zero.
\end{thm}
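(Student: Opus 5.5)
The plan is to translate each condition into a statement about covariant derivatives of frame fields, and then use linearity of $\nabla$ in the lower slot together with the Leibniz rule. Write a general field as $Y=f^{j}B_{j}+g^{i}A^{*}_{i}$. Then
\[
\nabla_{X}Y=(Xf^{j})B_{j}+f^{j}\nabla_{X}B_{j}+(Xg^{i})A^{*}_{i}+g^{i}\nabla_{X}A^{*}_{i}.
\]
So every condition in the statement reduces to a condition on the four blocks $h(\nabla_{X}B_{j})$, $v(\nabla_{X}B_{j})$, $h(\nabla_{X}A^{*}_{i})$, $v(\nabla_{X}A^{*}_{i})$ as $X$ runs over the frame. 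These blocks are exactly the connection coefficients of $\nabla$ in this frame.

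\textbf{First set: the special condition.} A horizontal lift $Y^{h}$ of a field $Y$ on $M$ can be written at $u$ as $Y^{h}=f^{j}B_{j}$ with $f^{j}(u)=\theta(Y^{h})^{j}=(u^{-1}Y_{\pi(u)})^{j}$. By the Leibniz expansion, $\pi_{*}(\nabla_{Z}Y^{h})=(Zf^{j})\,u\xi_{j}+f^{j}\pi_{*}(h(\nabla_{Z}B_{j}))$. The induced connection of Theorem \ref{teotrpi} satisfies property (4), and hence the special condition. By Lemma \ref{lemdich} and Corollary \ref{cordevi1}, its horizontal block is $h(\nabla_{B_{k}}B_{j})=0$ and $h(\nabla_{A^{*}}B_{j})=B(A\xi_{j})$. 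Therefore $\nabla$ satisfies the special condition if and only if, for every $Y$ and $Z$, the term $f^{j}\pi_{*}(h(\nabla_{Z}B_{j}))$ agrees with the same term computed for the induced connection. The "if" direction is immediate. For "only if", evaluate at a point and choose $Y$ so that only one $f^{j}$ is nonzero there; this isolates each $\pi_{*}(h(\nabla_{X}B_{j}))$. Since $\pi_{*}$ is injective on horizontal vectors, this determines $h(\nabla_{X}B_{j})$. One must check that the $(Zf^{j})$ terms are the same for both connections; they are, because they do not involve $\nabla$.

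\textbf{Remaining sets: conditions (a), (b), (c).} For (a), parallel transport preserving horizontality is equivalent to $\nabla_{X}$ mapping horizontal fields to horizontal fields. By the Leibniz expansion with $g^{i}=0$, this holds if and only if $v(\nabla_{X}B_{j})=0$ for all frame $X$ and all $j$. The analogous argument applied to vertical fields $g^{i}A^{*}_{i}$ gives (b) if and only if $h(\nabla_{X}A^{*}_{i})=0$. Condition (c) reads $\omega(\nabla_{X}A^{*})=0$. Since $\omega$ restricted to the vertical space is the isomorphism $A^{*}\mapsto A$, and $A^{*}=c^{i}A_{i}^{*}$ with constant $c^{i}$, this is linear in $A$ and in $X$. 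So (c) holds if and only if $v(\nabla_{X}A^{*}_{i})=0$ for all frame $X$ and all $i$.

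\textbf{Main obstacle.} The step needing the most care is the equivalence "parallel transport preserves horizontality" if and only if "$\nabla$ preserves horizontal fields". For the forward direction, differentiate the transport of a horizontal field along a curve, using $\nabla_{\dot\tau}Y=\lim_{h\to0}\frac{1}{h}[\tau_{t}^{t+h}Y-Y]$ as in Theorem \ref{teodevi}. For the converse, write the transport equation in the frame and observe that the ODE for the vertical components is homogeneous linear, because the block $v(\nabla_{X}B_{j})$ vanishes. Zero initial vertical component then forces it to stay zero by uniqueness of solutions. The same argument handles verticality.
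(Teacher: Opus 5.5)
Your proposal is correct and follows the same route as the paper. The special condition forces $\pi_{*}$, and hence the horizontal part, of $\nabla_{X}B_{j}$ to agree with the induced connection, and conditions (a), (b), (c) are each equivalent to the vanishing of the corresponding block $v(\nabla_{X}B_{j})$, $h(\nabla_{X}A^{*}_{i})$, $v(\nabla_{X}A^{*}_{i})$; you supply the details the paper's two-line proof leaves implicit (Leibniz expansion, isolating coefficients via a $Y$ with $u^{-1}Y_{\pi(u)}=\xi_{j}$, injectivity of $\pi_{*}$ on horizontal vectors, and the linear-ODE argument tying preservation under parallel transport to preservation under $\nabla$).
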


\begin{proof}
By definition, special condition determines the canonical projection of the covariant derivatives of horizontal fields, and hence it does determine its horizontal components, so it also determines the coefficients in the first set to be the same as it was for induced transport. For the other 3 sets, observe that conditions (a), (b) and (c) are equivalent to the corresponding expressions being all equal to zero.
\end{proof}

\begin{rem}\label{obsclal4}
Not all set of coefficients on Theorem $\ref{teoclal}$ have the same cardinality, the first one has $n^{2}(n^{2}+n)$ elements, the second and third have $n^{3}(n^{2}+n)$ elements and the last one has $n^{4}(n^{2}+n)$ elements.
\end{rem}

\begin{rem}\label{obsclal5}
We could have much more classes by splitting in two each of the sets in Theorem $\ref{teoclal}$ and each of the conditions in remark $\ref{obsclal3}$ considering separately whether $X$ (or the path) is strictly horizontal or vertical. Along with minimal and special conditions this gives rise to a total of 8 conditions and $2^{7}$ subclasses of lifts.
\end{rem}

\begin{cor}\label{corclal}
As in Theorem $\ref{teoclal}$, the coefficients implied in expressions of the form $h(\nabla_{B_{j}}B_{k})$ are all equal to zero (as in Lemma $\ref{lemdich}$) if and only if $\nabla$ satisfies the minimal condition.
\end{cor}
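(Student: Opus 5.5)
The plan is to mirror the proof of Theorem \ref{teoclal}, restricted to horizontal directions. Here the minimal condition plays the role of the special condition. By Definition \ref{defcmel} and equation (0) of Remark \ref{obsclal1}, the minimal condition says that $\pi_{*}(\nabla_{X^{h}}Y^{h})=\widetilde{\nabla}_{X}Y$ for every $X\in TM$ and every field $Y$ on $M$.

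\textbf{The induced connection.} By Theorem \ref{teotrpi}, property (4), the induced connection satisfies the minimal condition, since its transport commutes with $\pi_{*}$ and with $\widetilde{\tau}$. Lemma \ref{lemdich} gives $\nabla_{B_{j}}B_{k}=0$ for it.

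\textbf{Reduction to $\pi_{*}$.} Take any connection $\nabla$ on $L(M)$. Because $\pi_{*}$ restricted to the horizontal subspace at each $u$ is an isomorphism onto $T_{\pi(u)}M$, the horizontal part $h(\nabla_{B_{j}}B_{k})$ vanishes exactly when $\pi_{*}(\nabla_{B_{j}}B_{k})=0$.

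\textbf{The minimal condition forces vanishing.} Assume $\nabla$ satisfies the minimal condition. Write $B_{k}$ locally as $Y^{h}$ adjusted by frame-dependent coefficients: at $u$, $B_{k}(u)=(u\xi_{k})^{u}$, and the frame $u$ varies along horizontal curves. I would compare $\nabla$ with the induced connection $\nabla^{0}$. Both satisfy (0), so
\[
\pi_{*}\bigl(\nabla_{X^{h}}Y^{h}-\nabla^{0}_{X^{h}}Y^{h}\bigr)=0
\]
for all basic horizontal fields $Y^{h}$. The difference $D=\nabla-\nabla^{0}$ is a tensor. Hence $\pi_{*}\circ D(\cdot,\cdot)$ vanishes on horizontal pairs of vectors at each point, because horizontal lifts $X^{h},Y^{h}$ span the horizontal space pointwise. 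Since $B_{j},B_{k}$ are horizontal, $\pi_{*}(D(B_{j},B_{k}))=0$. This gives $\pi_{*}(\nabla_{B_{j}}B_{k})=\pi_{*}(\nabla^{0}_{B_{j}}B_{k})=0$, so $h(\nabla_{B_{j}}B_{k})=0$.

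\textbf{Converse.} Assume $h(\nabla_{B_{j}}B_{k})=0$ for all $j,k$. Then $\pi_{*}\circ D$ vanishes on all pairs $(B_{j},B_{k})$, and by tensoriality on all horizontal pairs. Therefore $\pi_{*}(\nabla_{X^{h}}Y^{h})=\pi_{*}(\nabla^{0}_{X^{h}}Y^{h})=\widetilde{\nabla}_{X}Y$, which is the minimal condition.

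\textbf{Main obstacle.} The delicate step is making sure the comparison genuinely happens in tensorial form. The minimal condition is stated for lifts $Y^{h}$ of fields on $M$, while the $B_{k}$ are not of that form. Passing to the difference tensor $D$ removes this issue. One must also check that $\nabla$ and $\nabla^{0}$ act identically on the function coefficients: in $\nabla(fY)=(Xf)Y+f\nabla Y$ the derivative term $(Xf)Y$ is the same for both connections, so it cancels in $D$.
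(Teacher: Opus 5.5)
Your proof is correct and follows the paper's reasoning, which the corollary inherits from the proof of Theorem \ref{teoclal}: the minimal condition fixes $\pi_{*}$, and hence the horizontal part, of $\nabla_{B_{j}}B_{k}$ to agree with the induced transport, where it vanishes by Lemma \ref{lemdich}. Your difference tensor $D=\nabla-\nabla^{0}$ makes rigorous what the paper asserts in one line. In particular it shows that the condition, although stated only for basic lifts $Y^{h}$, still controls the non-basic fields $B_{k}$, and it supplies the converse direction explicitly.
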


\begin{rem}
Subclasses in Remark \ref{obsclal5} could be extended, requiring the coefficient in each set to be just constants (not necessarily zero). These conditions are well define, meaning that if we exchange one of this frames for another of the same kind (compose only of fundamental and horizontal standard fields) the coefficients may change their values but will remain to be constants. An even weaker set of conditions would require the coefficients to be constants only when restricted to any fiber.
\end{rem}

\section{Geodesic modeling}

\begin{defin}\label{defvertproy}
For a manifold with connection $M$, we define the \textit{vertical projection} $\nu:T(L(M))\rightarrow\Omega_{1}^{1}(M)$ of $X\in T_{u}(L(M))$ given by
\begin{equation}\label{eqvertproy}
\nu(X)=u\omega(X)u^{-1}.
\end{equation}
We also define the \textit{vertical lift} of $F\in\Omega_{1}^{1}(M)$ as the unique vertical vector in $T_{u}(L(M))$ that vertically projects on $F$.
\end{defin}

\begin{defin}\label{deflevaj}
Given $n^{2}$ linearly independent (point by point) fields $F_{i}\in\Omega_{1}^{1}(M)$, we define the \textit{adjust lift} of a connection on a manifold $M$ as the unique lift of which parallel transport preserves horizontality, verticality, the canonical form and the vertical lifts of the $F_{i}$.
\end{defin}

\begin{rem}
The tuple $F_{i}$ may only exist locally, although, if we fix a particular field $F=F_{1}$ everywhere different from zero, using partitions of the unit it is possible to combine local adjust liftings in order to construct a global lifting that respects the parallelism of $F$ everywhere, and keeps preserver horizontality, verticality and the canonical form.
\end{rem}

\subsection{Proof of Theorem 1}
We are ready to give a proof of Theorem \ref{teolevaj}.
\begin{proof}
Let $F_{1}=F$ and let us choose the other $n^{2}-1$ fields $F_{i}$, so that $F_{1}$,$F_{2}$, ... , $F_{n^{2}}$ are linearly independent (point by point). Let $\nabla$ be the adjust lift of $\widetilde{\nabla}$ determinated by the fields $F_{i}$. Let $\gamma$ be the geodesic for $\nabla$ such that $\theta\left(\dot{\gamma}\right)=\xi$ and $\nu\left(\dot{\gamma}\right)=F$. Let $\tau=\pi(\gamma)$ and $\phi=\tau^{h}$ its horizontal lift from $\gamma(0)=\phi(0)$. We shall see that in fact $\widetilde{\nabla}_{\dot{\tau}}\dot{\tau}=F\dot{\tau}$. 
By definition
\begin{equation}\label{eqdemlevaj}
\widetilde{\nabla}_{\dot{\tau}}\dot{\tau}=\lim_{t\rightarrow0}\dfrac{\tau^{-1}(\dot{\tau}_{t})-\dot{\tau}_{0}}{t},
\end{equation}
where $\tau^{-1}$ represents the parallel transport along $\tau$, from $T_{\tau_{t}}(M)$ to $T_{\tau_{0}}(M)$. Let $\gamma_{t}=\phi_{t}g_{t}$ with $g_{t}\in G$, we see that $\phi_{t}\dot{g}_{t}=v\left(\dot{\gamma}_{t}\right)$ which in turn implies
\[
\nu(\phi_{t}\dot{g}_{t})=\gamma_{t}\omega(\phi_{t}\dot{g}_{t})(\gamma_{t})^{-1}=F(\tau_{t}).
\]
On the other hand, $\theta\left(\dot{\phi}_{t}\right)=g_{t}\xi$ and hence
\[
\dot{\tau}_{t}=\pi_{*}\left(\dot{\gamma_{t}}\right)=\gamma_{t}\xi=\phi_{t}(g_{t}\xi)=\pi_{*}\left(\dot{\phi_{t}}\right).
\]
As parallel transport on $L(M)$ preserves $\theta$ and since $\phi$ is horizontal, parallel transport along $\phi$ also commutes with the canonical projection and parallel transport on $M$, which gives
\begin{equation}\label{eqdemlevaju}
\widetilde{\nabla}_{\dot{\tau}}\dot{\tau}=\lim_{t\rightarrow0}\dfrac{\phi_{0}(g_{t}\xi-\xi)}{t}=F\dot{\tau},
\end{equation}
given
\[
F\dot{\tau}=\gamma_{t}\omega(\phi_{t}\dot{g}_{t})(\gamma_{t})^{-1}\gamma_{t}\xi=\gamma_{t}\omega(\phi_{t}\dot{g}_{t})\xi=\gamma_{t}\dot{g}_{t}\xi,
\]
and also $\gamma_{0}=\phi_{0}$.
\end{proof}

\section{Examples and applications}\label{c5secsgea}

Given a geodesically complete manifold $M$ with connection $\nabla$ let us consider the family of equations (with $k\in\mathbb{R}-\{0\}$)

\begin{equation}\label{eqfami}
\nabla_{\dot{\gamma}}\dot{\gamma}=kF\dot{\gamma},
\end{equation}
where $F\in\Omega_{1}^{1}(M)$ is fixed and everywhere different form zero. We obtain that if a curve is solution for one of these equations, it is a solution for all the others when varying the time scale. By replacing and using the chain rule we obtain the next lemma.

\begin{lem}\label{lemk}
The curve $\gamma(t)=\sigma(kt)$ is solution of \eqref{eqfami} if and only if $\nabla_{\dot{\sigma}}\dot{\sigma}=F\dot{\sigma}$.
\end{lem}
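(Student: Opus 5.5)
The plan is a direct chain-rule computation, using only linearity and tensoriality of the covariant derivative along curves; no earlier result of the paper is needed. Set $\gamma(t)=\sigma(kt)$ with $k\neq 0$, and put $s=kt$. First, $\dot{\gamma}(t)=k\,\dot{\sigma}(kt)$. Next, the covariant derivative along a curve behaves well under affine reparametrization: if $V(t)=W(kt)$ is a vector field along $\gamma$, where $W$ is a field along $\sigma$, then
\[
\nabla_{\dot{\gamma}}V\,(t)=k\,(\nabla_{\dot{\sigma}}W)(kt).
\]
One way to see this is from the limit definition via parallel transport, used in the proof of Theorem \ref{teolevaj}. Parallel transport along $\gamma$ from $t$ to $t+h$ coincides with parallel transport along $\sigma$ from $kt$ to $kt+kh$, since parallel transport is independent of parametrization. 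The difference quotient in $h$ is therefore $k$ times the difference quotient in $kh$. Alternatively, in local coordinates, $\frac{d}{dt}W^{i}(kt)+\Gamma^{i}_{jl}\dot{\gamma}^{j}W^{l}=k\big(W'^{i}+\Gamma^{i}_{jl}\dot{\sigma}^{j}W^{l}\big)(kt)$.

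Now apply this with $V=\dot{\gamma}=k\,\dot{\sigma}(k\cdot)$. Using linearity of $\nabla$ and the constancy of $k$,
\[
\nabla_{\dot{\gamma}}\dot{\gamma}\,(t)=k^{2}\,(\nabla_{\dot{\sigma}}\dot{\sigma})(kt).
\]
On the right-hand side of \eqref{eqfami}, since $F$ is a $(1,1)$-tensor field and hence acts pointwise linearly, we get
\[
kF\dot{\gamma}(t)=kF_{\sigma(kt)}\big(k\,\dot{\sigma}(kt)\big)=k^{2}\,(F\dot{\sigma})(kt).
\]
Equation \eqref{eqfami} for $\gamma$ is therefore equivalent to $k^{2}\big[\nabla_{\dot{\sigma}}\dot{\sigma}-F\dot{\sigma}\big](kt)=0$ for all $t$. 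Because $k\neq0$, the factor $k^{2}$ can be cancelled, and $t\mapsto kt$ is a bijection of $\mathbb{R}$ (or a bijection between the corresponding parameter intervals). This condition is thus equivalent to $\nabla_{\dot{\sigma}}\dot{\sigma}=F\dot{\sigma}$ for all $s$, which proves both directions of Lemma \ref{lemk}.

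The only step that needs any care is the reparametrization identity for covariant derivatives along curves; everything else is pointwise algebra. I would justify that identity with the coordinate formula, since it reduces to the ordinary chain rule $\frac{d}{dt}W^{i}(kt)=kW'^{i}(kt)$. For a negative $k$ the curve runs backwards, but the same formulas hold verbatim, so no separate case is needed.
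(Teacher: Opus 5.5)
Your proposal is correct and follows the paper's approach; the paper justifies the lemma only by ``replacing and using the chain rule.'' Your computation $\nabla_{\dot{\gamma}}\dot{\gamma}(t)=k^{2}(\nabla_{\dot{\sigma}}\dot{\sigma})(kt)$ and $kF\dot{\gamma}(t)=k^{2}(F\dot{\sigma})(kt)$, followed by cancelling $k^{2}\neq 0$, is that argument written out in full.
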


\begin{rem}\label{obsss}
Let $a,b\in M$ and $X_{a}\in T_{a}(M)$ such that the geodesic in the direction of $X_{a}$ is locally the only one that connects $a$ and $b$. This can be expressed in terms of the geodesic flow function $\Phi:\mathbb{R}\times TM\rightarrow M$ as $\Phi_{t}(X_{a})=b$, where $t=1$ is the time during which we must follow geodesic flow to get from one point to the other. If we now consider the geometry of $L(M)$, Lemma \ref{lemk} can also be seen as a consequence of the following property of any geodesic flow $\Phi_{t}(kX)=\Phi_{kt}(X)$.
\end{rem}

\begin{defin}
Let $\Psi_{k}:TM\rightarrow M$ be the flow of \eqref{eqfami} for $t=1$, we define for $k=0$
\begin{equation}\label{eqfluy}
Y_{1}=(d\Phi_{1}(X_{a}))^{-1}\left(\dfrac{d}{dk}\left(\Psi_{k}(X_{a})\right)\right)\in T_{X_{a}}(T_{a}(M))\cong T_{a}(M).
\end{equation}
The invertibility of $d\Phi_{1}$ is locally guaranteed by geodesic completeness of $M$, the existence of the derivative of $\Psi$ respect to $k$ is guaranteed by the fact that the eigenvalues of $F$ are locally bounded.
\end{defin}

\begin{thm}\label{teofront}
Let $\gamma_{k}$ be the solution of \eqref{eqfami} such that $\gamma_{k}(0)=a$ and $\gamma_{k}(1)=b$ are fixed. Given $\dot{\gamma}_{0}(0)=X_{a}$, this boundary value problem has a solution if there is a function $Y:\mathbb{R}\rightarrow TM$ such that $\dot{\gamma}_{k}(0)=X_{a}+Y(k)$. In this case we observe that the linear approximation of $Y$ is $Y'(0)=-Y_{1}$.
\end{thm}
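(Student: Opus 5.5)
The plan is to read Theorem \ref{teofront} as a statement about the map $(k,X)\mapsto\Psi_{k}(X)$, where $\Psi_{k}(X)$ is the endpoint at $t=1$ of the solution of \eqref{eqfami} with initial velocity $X\in T_{a}(M)$. By definition, $\Psi_{0}=\Phi_{1}$ restricted to $T_{a}(M)$. The boundary value problem asks for $X=X_{a}+Y(k)$ satisfying $\Psi_{k}(X_{a}+Y(k))=b$ with $Y(0)=0$. I would solve this with the implicit function theorem applied to
\[
G(k,Y)=\Psi_{k}(X_{a}+Y)\in M,\qquad G(0,0)=\Phi_{1}(X_{a})=b .
\]
I would work near $(0,0)$ and identify $T_{X_{a}}(T_{a}(M))\cong T_{a}(M)$ as in \eqref{eqfluy}.

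\textbf{Step 1: smoothness.} The equation \eqref{eqfami} is a second order ODE on $M$ whose coefficients depend smoothly on the parameter $k$; the term $kF\dot{\gamma}$ is linear in $k$. Smooth dependence of ODE solutions on parameters and initial data then makes $G$ smooth near $(0,0)$. In particular $\partial_{k}G(0,0)=\frac{d}{dk}\Psi_{k}(X_{a})|_{k=0}$ exists. This is the derivative used in the definition of $Y_{1}$, and the local boundedness of $F$ mentioned in the definition guarantees that the solutions stay defined up to $t=1$ for small $k$. Lemma \ref{lemk} could optionally be used to express $\Psi_{k}$ through solutions of the $k=1$ equation with rescaled data, but I don't expect to need it.

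\textbf{Step 2: invertibility in $Y$.} We have $\partial_{Y}G(0,0)=d\Phi_{1}(X_{a})$ restricted to the fibre $T_{a}(M)$. This is the differential of the exponential map at $X_{a}$. By the hypothesis of Remark \ref{obsss}, the geodesic from $a$ in direction $X_{a}$ is locally the only one reaching $b$, so $b$ is not conjugate to $a$ along it and this differential is invertible; the paper states this as the local invertibility in the definition of $Y_{1}$. The implicit function theorem then gives a unique smooth $Y(k)$ near $0$ with $Y(0)=0$ and $G(k,Y(k))=b$. This yields existence of $\gamma_{k}$ for small $k$, together with the representation $\dot{\gamma}_{k}(0)=X_{a}+Y(k)$.

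\textbf{Step 3: the linearization.} Differentiate $G(k,Y(k))=b$ at $k=0$:
\[
\frac{d}{dk}\Psi_{k}(X_{a})\Big|_{k=0}+d\Phi_{1}(X_{a})\,Y'(0)=0 .
\]
Solving gives $Y'(0)=-(d\Phi_{1}(X_{a}))^{-1}\frac{d}{dk}\Psi_{k}(X_{a})|_{k=0}=-Y_{1}$ by \eqref{eqfluy}.

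\textbf{Main obstacle.} The only real difficulty is justifying the nondegeneracy of $d\Phi_{1}(X_{a})$. Geodesic completeness alone does not give it; the local uniqueness assumption of Remark \ref{obsss} has to be interpreted as a no-conjugate-point condition, and I would make that interpretation explicit. Given that, the rest is routine ODE dependence plus the chain rule. The conclusion is local: it holds for $k$ in a neighbourhood of $0$, which is all the linear approximation statement requires.
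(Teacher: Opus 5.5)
Your proof is correct and rests on the same first-order computation as the paper. The paper checks by the chain rule that $\frac{d}{dk}\Psi_{k}(X_{a}-kY_{1})$ vanishes at $k=0$, which is your Step 3 read in reverse; your implicit function theorem step additionally supplies the existence, local uniqueness and differentiability of $Y$ near $k=0$, which the paper's argument uses without justification.

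One caution: local uniqueness of the connecting geodesic does not by itself rule out conjugacy, since a fold-type conjugate vector can be an isolated preimage of $b$ under $\Phi_{1}|_{T_{a}(M)}$. So, as you suggest, take the invertibility of $d\Phi_{1}(X_{a})$ on $T_{a}(M)$ as the standing hypothesis already implicit in \eqref{eqfluy}, rather than deducing it from Remark \ref{obsss}.
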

\begin{proof}
Given that for $k=0$ we have $\Phi_{1}=\Psi_{k}$ and $(d/dk)\Psi_{k}(X_{a})=d\Phi_{X_{a}}(Y_{1})$, we see (for $k=0$) $(d/dk)\Psi_{k}(X_{a}-kY_{1})=0$, so $(d/dk)\Psi_{k}(X_{a}-kY_{1})=kZ(k)$, for some function $Z$. Because including the linear term $-kY_{1}$ allows us to eliminate the constant term on the derivative of $\Psi_{k}$ with respect to $k$ we see that in fact it is the linear approximation of the solution.
\end{proof}

\begin{rem}
The definition of $Y_{1}$ can be generalized by considering higher order derivatives with respect to $k$ along with the convention $Y_{0}=X_{a}$ in order to construct a power series. Finding out if in this way we can obtain a better solution goes beyond the scope of this investigation.
\end{rem}

\begin{exam}
Let us consider the family of equations \eqref{eqfami} on $M=SO(S^{2})$ where
\[
F=\left(
\begin{array}{cc}
0&-1\\
1&0
\end{array}
\right).
\]
By symmetry, the solutions are circumferences. To compute the radius as function of the speed, without loss of generality we assume that there is a solution
\[
\gamma(t)=(r\cos(\alpha t),r\sin(\alpha t),z),
\]
where $r^{2}+z^{2}=1$ and the speed is $v=\alpha r$. we see that
\[
\nabla_{\dot{\gamma}}\dot{\gamma}=\ddot{\gamma}(0)-Proy_{\gamma(0)}\ddot{\gamma}(0)=rz\alpha^{2}(-z,0,r).
\]
Given $\nabla_{\dot{\gamma}}\dot{\gamma}=kF\dot{\gamma}$ we obtain that $zv=rk$ and hence
\[
r=\dfrac{v}{\sqrt{v^{2}+k^{2}}}.
\]
It is evident that $r$ tends to 1 on both cases, when $k$ tends to zero or when $v$ tends to infinity, and that if $v$ tends to zero $r$ also does. At the same time, although Theorem \ref{teofront} is about existence of solution for \eqref{eqfami} which connects to given points $a,b\in M$, this example shows us that this is only locally true, because a circumference which connects two antipodal points must have radius one, but for $k\neq0$ such trajectory requires infinite speed.
\end{exam}

\begin{exam}
Let us consider the family of equations \eqref{eqfami} on $M=\mathbb{R}^{2}$ where
\[
F=\left(
\begin{array}{cc}
0&-1\\
1&0
\end{array}
\right).
\]
Solutions are circumferences with angular velocity $k$. Let us fix as boundary data $(1,y)$ and $(-1,y)$, where the value of $y$ is not relevant. For the time to go from one point to the other to be 1, the trajectory must be an arc of $k$ radians, so the radius of the circumferences should be $r=\sec(k/2)$. Taking initial and final time as $t=\pi/2k\mp1/2$ respectively along with the parametrization $\gamma_{k}(t)=r(cos(kt),sen(kt))$, we observe that $\dot{\gamma}_{k}(t_{a})=(-k\cot(k/2),k)$.
\end{exam}

\end{document}